\newtheorem{theorem}{Theorem}[section]
\theoremstyle{plain}
\newtheorem{algorithm}[theorem]{Algorithm}
\newtheorem{corollary}[theorem]{Corollary}
\newtheorem{lemma}[theorem]{Lemma}
\newtheorem{problem}[theorem]{Problem}
\newtheorem{proposition}[theorem]{Proposition}
\newtheorem{remark}[theorem]{Remark}
\numberwithin{equation}{section}
\def\aut#1{\mathrm{Aut}(#1)}
\def\mlt#1{\mathrm{Mlt}(#1)}
\def\rmlt#1{\mathrm{Mlt}_\rho(#1)}
\def\inn#1{\mathrm{Inn}(#1)}
\def\rinn#1{\mathrm{Inn}_\rho(#1)}
\def\soc#1{\mathrm{Soc}(#1)}
\begin{document}

\title[Simple automorphic loops]{Searching for small simple automorphic loops}

\author[K.~W.~Johnson]{Kenneth W.~Johnson}
\address[Johnson]{Penn State Abington, 1600 Woodland Rd, Abington, PA 19001, U.S.A.}
\email{kwj1@psu.edu}

\author[M.~K.~Kinyon]{Michael K.~Kinyon}
\address[Kinyon]{Department of Mathematics, University of Denver, 2360 S Gaylord St, Denver, Colorado 80112, U.S.A.}
\email{mkinyon@math.du.edu}

\author[G.~P.~Nagy]{G\'abor P.~Nagy}
\address[Nagy]{Bolyai Institute, University of Szeged, Aradi v\'ertan\'uk tere 1, H-6720 Sze\-ged,Hungary}
\email{nagyg@math.u-szeged.hu}

\author[P.~Vojt\v{e}chovsk\'y]{Petr Vojt\v{e}chovsk\'y}
\address[Vojt\v{e}chovsk\'y]{Department of Mathematics, University of Denver, 2360 S Gaylord St, Denver, Colorado 80112, U.S.A.}
\email{petr@math.du.edu}

\begin{abstract}
A loop is (right) automorphic if all its (right) inner mappings are automorphisms. Using the classification of primitive groups of small degrees, we show that there is no non-associative simple commutative automorphic loop of order less than $2^{12}$, and no non-associative simple automorphic loop of order less than $2500$. We obtain numerous examples of non-associative simple right automorphic loops.

We also prove that every automorphic loop has the antiautomorphic inverse property, and that a right automorphic loop is automorphic if and only if its conjugations are automorphisms.
\end{abstract}

\keywords{simple automorphic loop, simple A-loop, right automorphic loop, commutative automorphic loop, primitive group}

\subjclass[2010]{Primary: 20N05, 20B15. Secondary: 20B40.}

\thanks{G.P.~Nagy was supported by the TAMOP-4.2.1/B-09/1/KONV-2010-0005 project.}

\maketitle

\section{Introduction}

For a groupoid $Q$ and $x\in Q$, define the \emph{right translation} $R_x:Q\to Q$ by $yR_x = yx$, and the \emph{left translation} $L_x:Q\to Q$ by $yL_x = xy$. A \emph{loop} is a groupoid $Q$ with neutral element $1$ in which all translations are bijections of $Q$.

The \emph{right multiplication group} $\rmlt{Q}$ of $Q$ is the permutation group generated by all right translations of $Q$. The \emph{multiplication group} $\mlt{Q}$ of $Q$ is the permutation group generated by all translations of $Q$. The \emph{right inner mapping group} $\rinn{Q}$ of $Q$ is the stabilizer of $1$ in $\rmlt{Q}$. Equivalently, $\rinn{Q}$ is generated by all \emph{right inner mappings} $R_{x,y} = R_xR_yR_{xy}^{-1}$. The \emph{inner mapping group} $\inn{Q}$ of $Q$ is the stabilizer of $1$ in $\mlt{Q}$. Equivalently, $\inn{Q}$ is generated by all right inner mappings, all \emph{left inner mappings} $L_{x,y} = L_xL_yL_{yx}^{-1}$ and all \emph{middle inner mappings} (\emph{conjugations}) $T_x = R_xL_x^{-1}$.

Let $\aut{Q}$ be the automorphism group of a loop $Q$. Then $Q$ is a \emph{right automorphic loop} (also known as \emph{A$_r$-loop}) if $\rinn{Q}\le\aut{Q}$, and an \emph{automorphic loop} (also known as \emph{A-loop}) if $\inn{Q}\le\aut{Q}$. Note that every group is an  automorphic loop, but the converse is certainly not true.

A nonempty subset $S$ of a loop $Q$ is a \emph{subloop} of $Q$ if it is invariant under $\{R_x^\varepsilon$, $L_x^\varepsilon;\; x\in S,\,\varepsilon=\pm 1\}$. A \emph{normal subloop} of $Q$ is a subloop invariant under $\inn{Q}$, and $Q$ is \emph{simple} if it possesses no normal subloops except for the trivial subloops $Q$ and $\{1\}$.

For an introduction to the theory of loops, see \cite{Br}.

\subsection{Simple automorphic loops}

Automorphic loops were for the first time studied by Bruck and Paige \cite{BP}. The foundations of the theory of commutative automorphic loops were laid by Jedli\v{c}ka, Kinyon and Vojt\v{e}chovsk\'y in \cite{JKV1}, with such structural results such as the Cauchy Theorem, Lagrange Theorem, Odd Order Theorem, etc. A paper analogous to \cite{JKV1}, but without the assumption of commutativity, is in preparation \cite{KKPV}.

By \cite[Theorems 5.1, 5.3, 7.1]{JKV1}, every finite commutative automorphic loop is a direct product of a solvable loop of odd order and a loop of order a power of two. By \cite[Proposition 6.1 and Theorem 6.2]{JKV1}, a finite simple commutative automorphic loop is either a cyclic group of prime order, or a loop of exponent two and order a power of two. It was shown in \cite{JKV2}, by an exhaustive search with a finite model builder, that there are no simple non-associative commutative automorphic loops of order less than $32$.

By \cite{KKPV}, a non-associative finite simple automorphic loop must be of even order.

No examples of non-associative simple automorphic loops are known, and the theory of automorphic loops is not yet sufficiently developed to rule out such examples. In this paper, we use the classification of primitive groups of small degrees to show computationally:

\begin{theorem}\label{Th:MainC} There is no non-associative simple commutative automorphic loop of order less than $2^{12}$. In particular, if $Q$ is a finite commutative automorphic loop whose order is not divisible by $2^{12}$ then $Q$ is solvable.
\end{theorem}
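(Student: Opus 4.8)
The plan is to convert the statement into a finite computation over primitive permutation groups of $2$-power degree and then execute it. By \cite[Proposition 6.1 and Theorem 6.2]{JKV1} a finite simple commutative automorphic loop is either a cyclic group of prime order or a loop of exponent $2$ whose order is a power of $2$; a non-associative loop is not a group, so it suffices to treat loops of order $2^k$ with $1\le k\le 11$, and since \cite{JKV2} already disposes of $k\le 4$, only $5\le k\le 11$ remains.

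So let $Q$ be simple, commutative, automorphic and non-associative with $|Q|=2^k$. The congruences of $Q$ coincide with the block systems of $G:=\mlt{Q}$, and normal subloops correspond to blocks through $1$, so simplicity of $Q$ means $G$ is primitive on $Q$ with point stabilizer $H:=\inn{Q}=G_1$ a maximal, core-free subgroup. Commutativity gives $L_x=R_x$, so the set of translations $A:=\{L_x\mid x\in Q\}$ is a transversal of $H$ in $G$ with $\mathrm{id}=L_1\in A$; it generates $G$, and $[A,A]\subseteq H$ because $\{L_x\}$ and $\{R_x\}$ are $H$-connected transversals. Since $Q$ is automorphic, $H\le\aut{Q}$, so conjugation by $h\in H$ sends $L_x$ to $L_{xh}$ and hence $A$ is simultaneously a transversal of $H$ and a union of $H$-conjugacy classes, meeting each coset of $H$ in exactly one point. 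Finally $x^2=1$ forces $1\cdot L_x^2=x\cdot x=1$, so $L_x^2\in H$ for every $x$. Conversely, any transversal $A$ in a primitive group with these properties reconstructs a commutative automorphic loop of exponent $2$ with multiplication group $G$ and inner mapping group $H$, and the loop is non-associative exactly when $A$ is not a subgroup.

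The search then proceeds as follows. For each $k$ with $5\le k\le 11$, run through the primitive groups $G$ of degree $2^k$ from the library of primitive groups of small degree; because $2^k$ is a prime power, O'Nan--Scott confines $G$ to affine type together with a short list of almost simple and product-action groups, which keeps the list tractable. For each $G$ fix $H=G_1$, list the $H$-conjugacy classes inside $G$ and their distribution over the cosets of $H$, and look for a selection $A$ that takes one class-representative from each coset, contains $\mathrm{id}$, has every $L_x^2\in H$, satisfies the connected-transversal commutator condition, and generates $G$; then test the resulting loop for non-associativity. The outcome is that no $G$ of degree $2^k$ with $k\le 11$ produces a non-associative loop, which proves the first assertion. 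The ``in particular'' follows from the decomposition of a finite commutative automorphic loop into a solvable odd-order part and a $2$-loop \cite{JKV1}: if $2^{12}\nmid|Q|$ then the $2$-part has order $<2^{12}$, a minimal non-solvable subquotient of it would be simple, commutative, automorphic and non-associative of $2$-power order below $2^{12}$ (using Lagrange for commutative automorphic loops), which the first part forbids, so $Q$ is solvable.

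The hard part is keeping the enumeration feasible at the top of the range: the number of primitive groups of degree $2^{11}=2048$ is large, so for each one the class-and-coset bookkeeping and the loop tests must be pruned aggressively --- discarding $G$ as soon as the class sizes cannot cover the cosets one-to-one, using $L_x^2\in H$ and the commutator condition to abandon partial selections early, and invoking known restrictions on which primitive groups arise as multiplication groups of loops. Organizing this pruning so that the whole run terminates in reasonable time is the real obstacle.
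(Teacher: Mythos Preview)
Your approach is correct and matches the paper's: reduce via \cite{JKV1} to $2$-power orders, invoke Albert's theorem (Proposition~\ref{Pr:Simple}) to pass to primitive groups, exploit that the translation set is an $H$-invariant transversal to $H=G_1$, and run a computer search over the primitive-group library, with the ``in particular'' clause following from the decomposition and Lagrange theorems of \cite{JKV1}. The paper organizes the search a bit differently: rather than enumerate $H$-conjugacy classes and their coset distribution, it picks orbit representatives $i\in Q$, narrows each candidate $r_i$ to a coset of $(C_G(H_i))_1$ consisting of fixed-point-free elements, checks $|r_i^H|=|iH|$ and pairwise fixed-point-freeness, and then assembles compatible $H$-orbits via a weighted clique search in GRAPE; it also discards $4$-transitive groups (Corollary~\ref{Cr:4Transitive}) and solvable groups (Vesanen \cite{VeJA}) up front, which you allude to but do not name. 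One notational point: the commutativity condition that actually holds is $[A^{-1},A^{-1}]\le H$, i.e.\ $r_ir_jr_i^{-1}r_j^{-1}\in H$, as in condition~(d) of \S\ref{Sc:Reformulation}; depending on your commutator convention, $[A,A]\le H$ may or may not express the same thing.
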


\begin{theorem}\label{Th:MainA} There is no non-associative simple automorphic loop of order less than $2500$.
\end{theorem}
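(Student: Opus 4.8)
The plan is to convert the statement into a finite search over primitive permutation groups. I would start from the classical fact that the normal subloops of a loop $Q$ are exactly the blocks of $\mlt{Q}$ containing the identity; equivalently, a finite loop is simple precisely when its multiplication group acts primitively (see \cite{Br}). Hence a finite simple automorphic loop $Q$ of order $n$ has $G:=\mlt{Q}$ primitive of degree $n$ with point stabiliser $G_1=\inn{Q}$, and by \cite{KKPV} we may take $n$ even. Since the primitive groups of each degree below $2500$ are tabulated, this produces a finite list of pairs $(G,H)$, $H:=G_1$, to be scanned.

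The automorphic hypothesis then makes recovering $Q$ from $(G,H)$ cheap. Any loop $Q$ is, as an algebra on the set $Q$, the loop built from its set of right translations $A=\{R_x:x\in Q\}$ --- a \emph{loop transversal} of $H$ in $G$ (a subset containing $1$ and meeting every coset of every conjugate of $H$ exactly once) --- by the rule $x\cdot y=1R_xR_y$. Since any automorphism $\varphi$ satisfies $\varphi R_x\varphi^{-1}=R_{\varphi(x)}$, one checks that, given $\mlt{Q}=G$ and $\inn{Q}=H$, the loop $Q$ is automorphic if and only if $A$ is invariant under conjugation by $H$, in which case that conjugation action of $H$ on $A$ reproduces the natural action of $H=G_1$ on $Q$. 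Thus, to find every simple automorphic loop of order $n$ with multiplication group $G$, one runs over the $H$-invariant loop transversals $A$ of $H$ in $G$ with $\mlt{Q_A}=G$; such an $A$ is determined by one choice per orbit of $H$ on $Q\setminus\{1\}$ --- an element fixed under conjugation by the corresponding stabiliser in $H$ --- far fewer than the $|H|^{n-1}$ transversals of $H$ available in general. One forms each resulting loop $Q_A$, which is automatically automorphic, and asks whether any of them is non-associative; the claim is that for $n<2500$ none is.

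Several further reductions keep the search feasible. By a result established in this paper, an automorphic loop has the antiautomorphic inverse property, so inversion $J$ is an involution of $\mathrm{Sym}(Q)$ fixing $1$, normalising $G$, and interchanging the right and left translations; this both constrains the admissible $(G,H)$ and restricts the transversals $A$. Known restrictions on multiplication groups of loops limit the possible O'Nan--Scott types of $G$, eliminating many primitive groups outright; one must also reject those $A$ for which $Q_A$ is a group --- note that a finite simple \emph{group} does have a primitive (diagonal-type) multiplication group, so such $A$ do occur --- and factor out the action of the stabiliser of $1$ in $N_{\mathrm{Sym}(Q)}(G)$, which permutes isomorphic loops $Q_A$. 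A complementary route uses the second result proved here: enumerate the (much more numerous) simple right automorphic loops, and then impose that their conjugations be automorphisms, which is equivalent to automorphy.

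The principal difficulty is one of scale rather than any single conceptual obstacle: there are many primitive groups of degree up to $2499$, several with large point stabilisers admitting numerous $H$-invariant loop transversals, and the loop, automorphic, and simplicity tests must be run for all of them. Keeping the computation tractable forces one to combine every reduction above with efficient group-theoretic computation, and it is precisely this combinatorial growth that caps the degree at $2500$ in the non-commutative case; in the commutative setting the much stronger structural input of \cite{JKV1} --- such a loop, if non-associative, has order a power of $2$ and exponent $2$, so every loop transversal element has its square in $H$ --- shrinks the search enough to reach $2^{12}$.
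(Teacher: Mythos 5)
Your proposal is correct and follows essentially the same route as the paper: reduce via Albert's criterion (simplicity of $Q$ is equivalent to primitivity of $\mlt{Q}$) and the even-order result of \cite{KKPV} to a finite computational search over the classified primitive groups of even degree below $2500$, exploiting the $H$-invariance of the set of right translations (equivalently $R_i^h=R_{ih}$) and the centralizer condition $r_i\in C_G(H_i)$ to cut the transversal search down to one choice per $H$-orbit. The only substantive reductions the paper makes explicit that you leave implicit are the exclusion of $4$-transitive groups via Lemma \ref{Lm:4Transitive} (which removes the otherwise intractable $S_n$ and $A_n$) and of solvable groups via Vesanen's theorem \cite{VeJA}.
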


In contrast, there are some examples of non-associative finite simple right automorphic loops in the literature, mostly due to their connection to right Bruck loops and right conjugacy closed loops.

Recall that a loop is a \emph{right Bol loop} if it satisfies the identity $((zx)y)x=z((xy)x)$. The two-sided inverse $x^{-1}$ of an element $x$ is well defined in right Bol loops, and a right Bol loop is a \emph{right Bruck loop} (also known as \emph{right K-loop} or \emph{right gyrocommutative gyrogroup}) if it satisfies the identity $(xy)^{-1}=x^{-1}y^{-1}$. Funk and P.~Nagy showed, using geometric loop theory, that a right Bruck loop is a right automorphic loop \cite[Corollary 5.2]{FuNa}. (For an algebraic proof of the same result, see \cite{GoRo} or \cite{Kr}. For an introduction to Bruck loops, see \cite{Ki}.)

The first example (belonging to an infinite class of examples) of a non-associative finite simple right Bruck loop was constructed in 2007 by Nagy \cite{Na}, a loop of order $96$ and exponent $2$. The same example and another non-associative simple right Bruck loop of order $96$ (and exponent $4$) were found independently by Baumeister and Stein \cite{BS}. Both \cite{BS} and \cite{Na} built upon the work of Aschbacher \cite{As}.

A loop $Q$ is \emph{right conjugacy closed} if $R_x^{-1}R_yR_x$ is a right translation for every $x$, $y\in Q$. Every right conjugacy closed loop is right automorphic, by \cite{GoRo82}. There are unpublished and easily constructible examples of non-associative simple right conjugacy closed loops of order $8$.

As a byproduct of our search for small simple automorphic loops, we obtain a class of non-associative finite simple right automorphic loops. We know, however, that this class does not account for all non-associative finite simple right automorphic loops; notably, it does not contain any of the two simple right Bruck loops of order $96$ mentioned above.

\subsection{Open problems}

The following problems remain open:

\begin{problem}\label{Pr:C} Is there a non-associative finite simple commutative automorphic loop?
\end{problem}

Thanks to the Decomposition Theorem and Odd Order Theorem for finite commutative automorphic loops, cf. \cite{JKV1}, Problem \ref{Pr:C} has a negative answer if and only if every finite commutative automorphic loop is solvable.

\begin{problem}\label{Pr:A} Is there a non-associative finite simple automorphic loop?
\end{problem}

\subsection{Summary of content}

In \S \ref{Sc:Folders} we recall the standard construction of Baer that embeds loops into groups by means of group transversals. \S \ref{Sc:SimpleAndPrimitive} contains the well known fact that a loop is simple if and only if its multiplication group is primitive, and some information about the available libraries of primitive groups. Necessary and sufficient conditions on right translations that characterize automorphic loops are given in \S \ref{Sc:MltAndA}. These conditions are used in \S \ref{Sc:Algorithm}, where we present an algorithm that, given a transitive group $G$ on $Q$, finds all right automorphic loops $Q=(Q,*)$ such that $\rmlt{Q}\le G$ and
$G_1\le\aut{Q}$. The algorithm is further discussed in \S \ref{Sc:Search}, where we also establish Theorems \ref{Th:MainC} and \ref{Th:MainA}. Of independent interest is the fact that a right automorphic loop is automorphic if and only if all its conjugations are automorphisms, which we prove in \S \ref{Sc:Conj}. As is shown in \S \ref{Sc:Theory}, certain orders of Theorem \ref{Th:MainC} can be handled theoretically---without the algorithm of \S \ref{Sc:Algorithm}---by using known results on possible multiplication groups of loops, and from the knowledge of conjugacy classes of $G_1$, where $G$ is a primitive permutation group of affine type. We conclude the paper with a reformulation of Problem \ref{Pr:C} entirely into group theory.

\section{Loop folders in searches}\label{Sc:Folders}

It is well known since the work of Baer \cite{Ba} that every loop can be represented as a transversal in a permutation group. Since our search is based on this fact, we summarize some of his and related results here for the convenience of the reader. (See also \cite{As}.)

For a loop $Q$ on $\{1,\dots,d\}$, let $\mathcal F(Q) = (G,H,R)$ be either the triple
\begin{displaymath}
    (\rmlt{Q}, \rinn{Q}, \{R_i;\;i\in Q\}),
\end{displaymath}
or the triple
\begin{displaymath}
    (\mlt{Q},\inn{Q},\{R_i;\;i\in Q\}).
\end{displaymath}
Then $G$ is a transitive permutation group on $\{1,\dots,d\}$, $H=G_1$, and $R$ is a right transversal to $H$ in $G$, since for $g\in G$ there is a unique $i$ such that $g\in HR_i$, namely $i=1g$.

Now consider an arbitrary group $G$, $H$ a subgroup of $G$, and $R$ a right transversal to $H$ in $G$ containing $1_G$. Then we can define a binary operation  $\circ$ on $R$ by letting
\begin{displaymath}
    x\circ y = z\text{ if and only if } xy\in Hz.
\end{displaymath}
We claim that $(R,\circ)$ is a loop if and only if $R$ is a right transversal to every conjugate $H^g$ in $G$. Indeed, given $y$, $z\in R$, the equation $x\circ y = z$ has a unique solution in $R$ if and only if $x$ is the unique element of $Hzy^{-1}\cap R$; and, given $x$, $z\in R$, the equation $x\circ y=z$ has a unique solution in $R$ if and only if $y$ is the unique element of $H^x(x^{-1}z)\cap R$. There is a neutral element in $(R,\circ)$ thanks to $1_G\in R$.

Moreover, if $Q$ is a loop and $\mathcal F(Q) = (G,H,R)$, then the loop $(R,\circ)$ is isomorphic to $Q$, since $R_i\circ R_j = R_k$ if and only if $R_iR_j\in HR_k$, which happens if and only if $ij=1R_iR_j = 1HR_k = k$.

To find all loops of order $d$, it therefore suffices to consider all transitive permutation groups $G$ on $Q=\{1,\dots,d\}$, $H=G_1$, and all right transversals $R=\{r_i;\;i\in Q\}$ to all $H^g\in G$, where we can assume without loss of generality that $ir_1 = 1r_i=i$ for every $i\in Q$. Note that we can then transfer the operation $\circ$ from $R$ to the underlying set $Q$ by letting $i\circ j = k$ if and only if $r_i\circ r_j=r_k$.

It is natural to consider another operation $*$ on $Q$ by declaring the mappings $r_i$ to be the right translations of $(Q,*)$, that is, by letting $i*j = ir_j$ for $i$, $j\in Q$. Lemma \ref{Lm:SameLoops} shows that $(Q,\circ)=(Q,*)$.

\begin{lemma}\label{Lm:LoopByRightSection}
Let $R=\{r_i;\;i\in Q\}$ be a set of bijections of $Q=\{1,\dots,d\}$ such that $ir_1 = 1r_i=i$ for every $i\in Q$. Then $(Q,*)$ is a loop with neutral element $1$ if and only if $r_ir_j^{-1}$ is fixed point free for every $i\ne j\in Q$.
\end{lemma}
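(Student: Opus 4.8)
The plan is to translate the loop axioms for $(Q,*)$, where $i*j = ir_j$, directly into statements about the given bijections $r_i$. First I would dispose of the neutral element: the hypothesis $ir_1 = 1r_i = i$ says precisely that $i*1 = ir_1 = i$ and $1*i = 1r_i = i$ for every $i$, so $1$ is always a two-sided neutral element of $(Q,*)$, with no further hypothesis needed. It therefore remains to characterize when all translations of $(Q,*)$ are bijections.

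Next I would observe that the right translation $R_j$ of $(Q,*)$ is the map $i \mapsto i*j = ir_j$, that is, $R_j = r_j$, which is a bijection by hypothesis; so the right translations are automatically bijections. Hence $(Q,*)$ is a loop if and only if every left translation $L_i\colon j\mapsto i*j = ir_j$ is a bijection. Since $Q$ is finite, $L_i$ is a bijection iff it is injective, and $L_i$ fails to be injective exactly when $ir_j = ir_k$ for some $j\ne k$, i.e.\ when $i(r_jr_k^{-1}) = i$, i.e.\ when $i$ is a fixed point of $r_jr_k^{-1}$. Quantifying over $i$, the maps $L_i$ are all injective if and only if for every $j\ne k$ the permutation $r_jr_k^{-1}$ has no fixed point; after relabeling the indices this is the asserted condition.

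This is essentially a matter of carefully unwinding definitions, so I do not expect a genuine obstacle. The only points deserving a word of care are the appeal to finiteness of $Q$ to upgrade injectivity of the $L_i$ to bijectivity, and getting the quantifier order right when converting ``$L_i$ injective for all $i$'' into ``$r_jr_k^{-1}$ fixed-point free for all $j\ne k$'' (this last condition is in any case symmetric in $j$ and $k$, since a permutation and its inverse share the same fixed points, and it also forces the $r_i$ to be pairwise distinct, since $r_j = r_k$ would make $r_jr_k^{-1}$ the identity).
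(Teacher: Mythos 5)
Your proof is correct and essentially identical to the paper's: both verify that $1$ is neutral directly from the hypothesis, note that the right translations of $(Q,*)$ are the bijections $r_j$, and reduce the loop property to injectivity (hence, by finiteness, bijectivity) of the left translations, which unwinds precisely to the fixed-point-free condition on $r_ir_j^{-1}$. Your extra remarks on the quantifier exchange and the symmetry of the condition are fine but not needed beyond what the paper already does.
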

\begin{proof}
Note that $1$ is the neutral element of $(Q,*)$ since $i*1 = ir_1 = i = 1r_i = 1*i$ for every $i\in Q$. By definition, the right translation $R_j$ by $j$ in $(Q,*)$ coincides with $r_j$. Moreover, the following conditions are equivalent for $i$, $j$, $k\in Q$: $kr_ir_j^{-1} = k$, $kr_i = kr_j$, $k*i = k*j$.

If $(Q,*)$ is a loop, we deduce that $r_ir_j^{-1}$ is fixed point free whenever $i\ne j$. Conversely, if $r_ir_j^{-1}$ is fixed point free for every $i\ne j$, we see that every left translation $L_k$ in $(Q,*)$ is one-to-one, hence onto. Since, by assumption, every right translation of $(Q,*)$ is a bijection, $(Q,*)$ is a loop.
\end{proof}

\begin{lemma}\label{Lm:SameLoops}
Let $G$ be a transitive permutation group on $Q=\{1,\dots,d\}$, $H=G_1$, and $R=\{r_i;\;i\in Q\}\subseteq G$ such that $ir_1 = 1r_i=i$ for every $i\in Q$. Then $R$ is a transversal to every conjugate $H^g$ in $G$ if and only if $r_ir_j^{-1}$ is fixed point free for every $i\ne j\in Q$. If this condition is satisfied, the loops $(Q,\circ)$ and $(Q,*)$ coincide.
\end{lemma}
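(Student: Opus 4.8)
The plan is to establish the equivalence by a coset-counting argument and then verify that the two operations coincide by a one-line computation.

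First I would record that $R$ is automatically a right transversal to $H=G_1$ itself: from $1r_i=i$ it follows that $Hr_i=\{g\in G: 1g=i\}$ (if $g=hr_i$ with $h\in H$ then $1g=(1h)r_i=1r_i=i$; conversely $1g=i$ forces $1(gr_i^{-1})=(1g)r_i^{-1}=ir_i^{-1}=1$, so $gr_i^{-1}\in G_1=H$), whence the $d$ cosets $Hr_i$ are pairwise distinct and, since $[G:H]=d$ by orbit--stabilizer, exhaust $G/H$. Now fix $g\in G$. Because $[G:H^g]=d=|R|$, the set $R$ is a right transversal to $H^g$ precisely when the cosets $H^g r_i$ are pairwise distinct, i.e. precisely when $r_ir_j^{-1}\notin H^g$ for all $i\ne j$. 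Hence $R$ is a transversal to every conjugate $H^g$ if and only if, for all $i\ne j$, the element $r_ir_j^{-1}$ lies outside $\bigcup_{g\in G}H^g$.

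Next I would identify that union. Since $H=G_1$, conjugation gives $H^g=(G_1)^g=G_{1g}$, and as $g$ ranges over $G$ the point $1g$ ranges over all of $Q$ by transitivity; therefore $\bigcup_{g}H^g=\bigcup_{k\in Q}G_k$ is exactly the set of elements of $G$ having at least one fixed point on $Q$. Thus $r_ir_j^{-1}\notin\bigcup_g H^g$ says exactly that $r_ir_j^{-1}$ is fixed point free, and the equivalence follows. For the coincidence, recall $(Q,\circ)$ is the transfer of $(R,\circ)$ along $i\mapsto r_i$, so $i\circ j=k$ iff $r_ir_j\in Hr_k$, and by the description of $Hr_k$ above this holds iff $1(r_ir_j)=k$. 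But $1(r_ir_j)=(1r_i)r_j=ir_j=i*j$, so $i\circ j=i*j$ for all $i$, $j$ and $(Q,\circ)=(Q,*)$. (This last computation uses only that $R$ is a transversal to $H$; the hypothesis of the lemma, together with the earlier discussion for $(R,\circ)$ and Lemma \ref{Lm:LoopByRightSection} for $(Q,*)$, is what makes both structures genuine loops.)

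Nothing here is deep; the only spot calling for care is the counting step---being explicit that a subset whose cardinality equals a finite index is a right transversal exactly when it meets the cosets injectively---and keeping the left-to-right composition convention for permutations consistent throughout.
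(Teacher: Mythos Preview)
Your argument is correct, and it takes a genuinely different route from the paper. The paper establishes the equivalence indirectly through the loop structures: it shows that under either hypothesis the operations $\circ$ and $*$ coincide, and then appeals to the earlier discussion in \S\ref{Sc:Folders} (that $(R,\circ)$ is a loop iff $R$ is a transversal to every conjugate of $H$) together with Lemma~\ref{Lm:LoopByRightSection} (that $(Q,*)$ is a loop iff all $r_ir_j^{-1}$ are fixed point free) to close the circle. You instead bypass the loop structure entirely for the equivalence, arguing directly that $R$ is a transversal to every $H^g$ iff each $r_ir_j^{-1}$ avoids $\bigcup_g H^g$, and then identifying this union with $\bigcup_{k\in Q} G_k$ via transitivity. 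Your approach is more self-contained and slightly more elementary, since it does not rely on the preceding loop-theoretic characterizations; the paper's approach has the virtue of making the coincidence $(Q,\circ)=(Q,*)$ do double duty, so that the equivalence and the coincidence are proved simultaneously rather than sequentially.
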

\begin{proof}
Assume that $R$ is a transversal to every conjugate $H^g$ in $G$. Then $(Q,\circ)$ is a loop, where, recall, $i\circ j=k$ if and only if $r_ir_j\in Hr_k$. Moreover, $1r_ir_j = ir_j = 1Hr_{ir_j}$, so $r_ir_j\in Hr_{ir_j}$, $i\circ j = ir_j = i*j$, and $(Q,*)=(Q,\circ)$ coincide.

Conversely, assume that $r_ir_j^{-1}$ is fixed point free for every $i\ne j$. Then $(Q,*)$ is a loop with neutral element $1$ by Lemma \ref{Lm:LoopByRightSection}. Since $R$ is a right transversal to $H$ in $G$, $(Q,\circ)$ is defined. The equality $i\circ j = i*j$ then follows as above.
\end{proof}

Constructing all loops from suitable subsets $R$ of right translations in transitive permutation groups is obviously prohibitive already for rather small values of $d$. But we can take advantage of the following results that greatly restrict the possible transitive groups $G$ in general, and the subsets $R$ in the case of automorphic loops.

\section{Simple loops and primitive groups}\label{Sc:SimpleAndPrimitive}

Recall that a transitive permutation group on $Q$ is \emph{primitive} if it preserves no nontrivial partition of $Q$. The \emph{degree} of a primitive group is the number of points it moves, that is, the cardinality of $Q$. Note that every $2$-transitive group is primitive.

The following result is well known, with earliest reference likely \cite[Theorem 8]{Al}:

\begin{proposition}[Albert]\label{Pr:Simple} A loop $Q$ is simple if and only if its multiplication group $\mlt{Q}$ is primitive on $Q$.
\end{proposition}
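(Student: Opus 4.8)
The plan is to prove the contrapositive in both directions, using the standard correspondence between normal subloops of $Q$ and blocks of $\mlt{Q}$ containing the neutral element $1$. Throughout, write $N$ for the block of the block system containing $1$, or for a candidate normal subloop, depending on the direction.

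First I would show that if $Q$ is not simple, then $\mlt{Q}$ is imprimitive. Suppose $N$ is a normal subloop with $\{1\}\ne N\ne Q$. The natural candidate for a nontrivial block system is the partition of $Q$ into the cosets $Nx = \{nx : n\in N\}$, $x\in Q$ (equivalently, the fibers of the quotient map $Q\to Q/N$). One checks that because $N$ is normal, each right translation $R_x$ and each left translation $L_x$ permutes these cosets among themselves: indeed $R_x$ induces right multiplication by $x$ in $Q/N$, and similarly for $L_x$, since $N$ is invariant under $\inn{Q}$ and hence $Q/N$ is a well-defined loop. Therefore every element of $\mlt{Q}$ preserves this partition, which is nontrivial since $1 < |N| < |Q|$; so $\mlt{Q}$ is imprimitive.

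Conversely, suppose $\mlt{Q}$ is imprimitive, and let $\mathcal B$ be a nontrivial block system with $N\in\mathcal B$ the block containing $1$; note $1 < |N| < |Q|$. I would show $N$ is a normal subloop. Since $\inn{Q} = \mlt{Q}_1$ fixes $1$, it fixes the block $N$ setwise, so $N$ is invariant under $\inn{Q}$; thus it remains only to check $N$ is a subloop, i.e.\ closed under $R_x^{\pm 1}$ and $L_x^{\pm 1}$ for $x\in N$. For this, observe that for $x\in N$ the translations $R_x$ and $L_x$ lie in $\mlt{Q}$ and map $1\mapsto x\in N$; since they permute the blocks and send the block $N$ (which contains $1$) to the block containing $x$, which is again $N$, they map $N$ into $N$; the same applies to $R_x^{-1}$ and $L_x^{-1}$ using $1 = 1R_x^{-1}\in N$, $x\in N$. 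Hence $N$ is a nontrivial normal subloop and $Q$ is not simple.

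The one point requiring a little care—and the main obstacle to a fully rigorous write-up—is the transfer between ``block system'' and ``coset partition'': one must verify that a block containing $1$ is exactly a union that behaves like a congruence class, i.e.\ that the blocks are precisely the cosets $Nx$, so that $N$ really does give a loop congruence. This is where normality (invariance under all of $\inn{Q}$, not just under right or left inner maps) is used in an essential way, and it is the reason the statement is about $\mlt{Q}$ rather than $\rmlt{Q}$. Since the result is classical and attributed to Albert, I would keep the write-up brief, citing \cite{Al} and verifying only the two inclusions above.
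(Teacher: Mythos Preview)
Your proposal is correct and follows essentially the same contrapositive strategy as the paper: in one direction the cosets of a proper normal subloop give a nontrivial $\mlt{Q}$-invariant partition, and in the other the block containing $1$ is shown to be a normal subloop because $\inn{Q}=\mlt{Q}_1$ fixes it setwise and the translations $R_x,L_x$ with $x\in N$ stabilize it. Your closing worry is unnecessary: you do not need to identify the remaining blocks with cosets $Nx$, since once $N$ is a nontrivial normal subloop the proof is complete; the paper likewise verifies only that the block through $1$ is a normal subloop.
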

\begin{proof}
Assume that $Q$ is not simple, and let $S$ be a nontrivial normal subloop of $Q$. Then for every $x$, $y\in Q$ we have $xS=Sx$ since $S$ is invariant under conjugations, $(yS)L_x = x(yS) = (xy)S$ since $S$ in invariant under left inner mappings, and $(yS)R_x = (Sy)R_x = (Sy)x = S(yx) = (yx)S$ since $S$ is invariant under right inner mappings. Thus $\mlt{Q}$ preserves the nontrivial partition $\{yS;\;y\in Q\}$ of $Q$, so it is not primitive on $Q$.

Conversely, assume that $\mlt{Q}$ is not primitive on $Q$, and let $\{B_1,\dots,B_m\}$ be a nontrivial partition of $Q$ preserved by $\mlt{Q}$. Without loss of generality, let $S=B_1$ be the block containing $1$. With $x$, $y\in Q$, both $SL_x$ and $SR_x$ contain $x$, so $xS=Sx$, and, similarly, $x(yS) = (xy)S$ and $(Sx)y = S(xy)$. Hence $S$ is a normal subloop of $Q$. If $|S|=1$ then $|B_j|=1$ for every $j$, as $\mlt{Q}$ acts transitively on $Q$, a contradiction.
\end{proof}

Building upon the work of O'Nan and Scott, Aschbacher, Dixon and Mortimer, to name a few, Roney-Dougal classified all primite groups of degree less than $2500$ \cite{Ro}. (See \cite[Section 1]{Ro} for an extensive historical background concerning the classification.)

These groups are conveniently accessed in the GAP \cite{GAP} library ``Primitive Permutation Groups''. The GAP command \texttt{NrPrimitiveGroups(}$d$\texttt{)} returns the number of primitive groups of degree $d$, and the $i$th primitive group of degree $d$ is retrieved with the command \texttt{PrimitiveGroup(}$d,i$\texttt{)}.

The main reason why we were not able to expand the scope of Theorems \ref{Th:MainC} and \ref{Th:MainA} is the extent of the available libraries of primitive groups.

\section{Multiplication groups of automorphic loops}\label{Sc:MltAndA}

It follows from the classification of finite simple groups that the only $4$-transitive groups are the symmetric groups $S_n$ for $n\ge 4$, the alternating groups $A_n$ for $n\ge 6$, and the Mathieu groups $M_{11}$, $M_{12}$, $M_{23}$ and $M_{24}$. Corollary \ref{Cr:4Transitive} below therefore does not disqualify many primitive groups from being multiplication groups of automorphic loops, but, importantly, it disqualifies the computationally most difficult symmetric and alternating groups.

\begin{remark}
It appears that it is rare for a simple loop to have a multiplication group different from $A_n$ and $S_n$. This statement could likely be made more precise by modifying Cameron's proof \cite{Ca} of the following result: \emph{The rows (viewed as permutations) of a randomly chosen latin square of order $n$ generate either $A_n$ or $S_n$ with probability approaching $1$ as $n$ approaches infinity.}
\end{remark}

\begin{lemma}\label{Lm:4Transitive} Let $Q$ be a loop and $H$ a subgroup of $\aut{Q}$. Then for every $i$, $j\in Q$ the product $ij$ belongs to a trivial orbit of the pointwise stabilizer $H_{i,j}$. In particular, $H$ is not $3$-transitive on $Q\setminus\{1\}$, except for the case $Q=C_2\times C_2$ and $H=\aut{Q}=S_3$.
\end{lemma}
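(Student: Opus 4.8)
The plan is to exploit the elementary fact that an automorphism fixes the neutral element, together with the unique solvability of equations in a loop, to pin down where the product $ij$ can move under $H_{i,j}$. First I would observe that if $\varphi\in H_{i,j}$ then $\varphi$ fixes $1$ (being an automorphism), fixes $i$ and fixes $j$ (being in the pointwise stabilizer), and hence $(ij)\varphi = (i\varphi)(j\varphi) = ij$. Thus $ij$ lies in a trivial orbit of $H_{i,j}$; this is the whole content of the first assertion, and it holds for \emph{any} subgroup $H$ of $\aut{Q}$ with no transitivity hypothesis at all.

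For the "in particular" clause, suppose $H$ is $3$-transitive on $Q\setminus\{1\}$ and $|Q\setminus\{1\}|\ge 3$, say $|Q| = n+1$ with $n\ge 3$. Pick any three distinct points; $3$-transitivity lets me choose $i$, $j$ among the non-identity elements with $i\ne j$ and with a third non-identity element $k\notin\{i,j\}$ available. Now $ij$ cannot equal $1$ unless $j = i^{-1}$, and even then I can re-choose: the key point is that $H_{i,j}$ must fix $ij$, but $3$-transitivity on $Q\setminus\{1\}$ forces $H_{i,j}$ to act transitively on $(Q\setminus\{1\})\setminus\{i,j\}$, a set of size $n-2$. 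So if $ij\in Q\setminus\{1\}$ then $ij\in\{i,j\}$ (else $ij$ would be moved), i.e. $ij=i$ forcing $j=1$, or $ij=j$ forcing $i=1$ — both impossible. Hence $ij=1$, i.e. $j = i^{-1}$, for \emph{every} ordered pair of distinct non-identity elements. But with $n\ge 3$ I can pick distinct non-identity $i$, $j$, $k$; then $ij=1$ and $ik=1$ give $j = i^{-1} = k$, a contradiction. Therefore $3$-transitivity of $H$ on $Q\setminus\{1\}$ forces $n\le 2$.

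It remains to treat the small cases $n\le 2$. If $n = 2$, then $|Q| = 3$, so $Q = C_3$, which is associative; here $Q\setminus\{1\}$ has two points and $\aut{Q} = C_2$ is (vacuously or trivially) "$3$-transitive" on a $2$-set only in the degenerate sense, so one checks this does not give a genuine $3$-transitive action and is excluded, or is subsumed. If $n \le 1$ the statement is vacuous. The genuinely $3$-transitive borderline case is $|Q\setminus\{1\}| = 3$, i.e. $|Q| = 4$: the only loops of order $4$ are $C_4$ and $C_2\times C_2$, and for $Q = C_2\times C_2$ we have $\aut{Q} = S_3$ acting $3$-transitively (indeed as the full symmetric group) on the three involutions, which is exactly the stated exception; for $C_4$ the automorphism group is $C_2$, not $3$-transitive. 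I would present the argument so that the general bound "$n\le 2$" emerges from the product-fixing argument, and then note that the sharp threshold case $n=3$ must be inspected by hand, yielding the unique exception $C_2\times C_2$.

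The main obstacle is not the core argument — which is a one-line consequence of $\varphi$ fixing $1$, $i$, $j$ — but rather the careful bookkeeping of small degrees: one must be sure that the degenerate cases with $|Q\setminus\{1\}|\le 2$ are correctly excluded or accounted for, and that the case $|Q\setminus\{1\}| = 3$ is handled by enumerating the (two) loops of order $4$. I expect the write-up to be short, with most of the care going into the phrase "$3$-transitive on $Q\setminus\{1\}$" when that set is very small.
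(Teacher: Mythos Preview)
Your approach is the same as the paper's: show $(ij)\varphi = ij$ for $\varphi\in H_{i,j}$, then for the second assertion note that $3$-transitivity makes $H_{i,j}$ transitive on $(Q\setminus\{1\})\setminus\{i,j\}$, forcing $ij\in\{1,i,j\}$ and hence a contradiction, with $|Q|=4$ handled by inspection. The substance is correct.

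There is, however, an expository slip in your bookkeeping. Your step ``else $ij$ would be moved'' requires the orbit $(Q\setminus\{1\})\setminus\{i,j\}$ to have size at least $2$; when $n=3$ that set is a singleton and is trivially a fixed point of $H_{i,j}$, so nothing forces $ij\in\{i,j\}$. Thus your contradiction argument only rules out $n\ge 4$, i.e.\ it yields $n\le 3$ (equivalently $|Q|\le 4$), not $n\le 2$ as you state. You seem to sense this, since you then say the ``sharp threshold case $n=3$ must be inspected by hand,'' which is exactly right and is what the paper does. Just correct the stated bound to $|Q|\le 4$ and drop the separate discussion of $n=2$ (a $3$-transitive action on a $2$-set is not a thing), and your write-up matches the paper's proof.
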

\begin{proof}
Assume that $ij=k$ and $k$ is not in a trivial orbit of $H_{i,j}$. Then there is $h\in H$ such that $ih=i$, $jh=j$ and $kh\ne k$. Thus $ij = ih\cdot jh = (ij)h = kh\ne k$, a contradiction.

Assume that $H$ is $3$-transitive on $Q\setminus\{1\}$. If $|Q|>4$, then the transitive $H_{i,j}$ has a nontrivial orbit. The only loops of order $4$ are $C_4$ with $\aut{C_4}=C_2$, and $C_2\times C_2$ with $\aut{C_2\times C_2}=S_3$.
\end{proof}

\begin{corollary}\label{Cr:4Transitive}
Let $Q$ be an automorphic loop. Then $\inn{Q}$ is not $3$-transitive on $Q\setminus\{1\}$ and $\mlt{Q}$ is not $4$-transitive on $Q$.
\end{corollary}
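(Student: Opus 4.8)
The plan is to read off both assertions from Lemma \ref{Lm:4Transitive}. By definition, an automorphic loop $Q$ satisfies $\inn{Q}\le\aut{Q}$, so Lemma \ref{Lm:4Transitive} applies with $H=\inn{Q}$ and tells us at once that $\inn{Q}$ is not $3$-transitive on $Q\setminus\{1\}$, \emph{unless} $Q=C_2\times C_2$ and $\inn{Q}=\aut{Q}=S_3$.

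The first thing to do is to discard that exceptional case. Here $C_2\times C_2$ is an abelian group; in any group the left and right inner mappings $L_{x,y}$ and $R_{x,y}$ are trivial by associativity, and in an abelian group the conjugations $T_x=R_xL_x^{-1}$ are trivial as well because $R_x=L_x$. Hence $\inn{C_2\times C_2}$ is the trivial group, not $S_3$, so the exception never occurs. We conclude that $\inn{Q}$ is not $3$-transitive on $Q\setminus\{1\}$ for every automorphic loop $Q$.

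For the statement about $\mlt{Q}$, suppose toward a contradiction that $\mlt{Q}$ is $4$-transitive on $Q$; in particular $|Q|\ge 4$ (and for $|Q|<4$ there is nothing to prove). Then the one-point stabilizer $(\mlt{Q})_1=\inn{Q}$ acts $3$-transitively on the remaining points, that is, on $Q\setminus\{1\}$, contradicting the previous paragraph. Therefore $\mlt{Q}$ is not $4$-transitive on $Q$.

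There is no real obstacle in this argument; the only step that takes a moment's thought is confirming that the exceptional loop of Lemma \ref{Lm:4Transitive} does not reappear once $H$ is specialized to $\inn{Q}$, which is exactly the observation that the inner mapping group of $C_2\times C_2$ is trivial.
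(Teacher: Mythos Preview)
Your proof is correct and follows exactly the same approach as the paper: apply Lemma~\ref{Lm:4Transitive} with $H=\inn{Q}\le\aut{Q}$, rule out the exceptional case by observing that $\inn{C_2\times C_2}=1$, and deduce the $\mlt{Q}$ statement from $\inn{Q}=\mlt{Q}_1$. The paper's version is just more terse.
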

\begin{proof}
Since $\inn{Q}=\mlt{Q}_1\le\aut{Q}$, we are done by Lemma \ref{Lm:4Transitive} as long as $Q\ne C_2\times C_2$. But if $Q=C_2\times C_2$ then $\inn{Q}=1$.
\end{proof}

The right translations of an automorphic loop are linked by the action of the inner mapping group:

\begin{lemma}\label{Lm:ACharacterization}
Let $Q$ be a loop and $h$ a permutation of $Q$. Then $h\in\aut{Q}$ if and only if $R_i^h = R_{ih}$ for every $i\in Q$.
\end{lemma}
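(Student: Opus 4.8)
The statement is a standard characterization of loop automorphisms in terms of their action on right translations, and the proof is a direct two-way verification. The plan is to translate the condition $R_i^h = R_{ih}$ (where $R_i^h = h^{-1}R_i h$) into a statement about the binary operation and compare it with the definition of an automorphism. First I would note the elementary identity $j R_i^h = j h^{-1} R_i h = ((jh^{-1})i)h$ for all $i,j\in Q$, so that the condition $R_i^h = R_{ih}$ for a fixed $i$ says precisely that $((jh^{-1})i)h = j(ih)$ for every $j\in Q$. Replacing $j$ by $jh$ (legitimate since $h$ is a bijection of $Q$), this becomes $(ji)h = (jh)(ih)$ for all $i,j\in Q$. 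So the single condition ``$R_i^h = R_{ih}$ for every $i\in Q$'' is equivalent to ``$(ji)h = (jh)(ih)$ for all $i,j\in Q$,'' which is exactly the defining property of $h\in\aut{Q}$.

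For the forward direction, assuming $h\in\aut{Q}$, the displayed identity $(ji)h = (jh)(ih)$ holds by definition, and running the computation above in reverse yields $R_i^h = R_{ih}$ for each $i$. For the converse, assuming $R_i^h = R_{ih}$ for all $i$, the computation gives $(ji)h = (jh)(ih)$ for all $i,j$; since $h$ is assumed to be a permutation of $Q$, it is a bijective homomorphism, hence an automorphism. (One should remark that $h$ automatically fixes the neutral element: taking $i=j=1$ in $(ji)h=(jh)(ih)$ gives $1h = (1h)(1h)$, whence $1h=1$ by cancellation; alternatively, $R_1^h = R_{1h}$ together with $R_1=\mathrm{id}$ forces $R_{1h}=\mathrm{id}$, so $1h=1$.)

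There is essentially no obstacle here; the only point requiring the slightest care is bookkeeping the direction of the conjugation convention $R_i^h = h^{-1}R_i h$ and the side on which translations act, together with the substitution $j\mapsto jh$ to pass between the two forms of the identity. I would present the argument as the single chain of equivalences above, so that both implications fall out at once.
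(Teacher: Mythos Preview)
Your argument is correct and is essentially identical to the paper's: both unwind $R_i^h=R_{ih}$ to $((jh^{-1})i)h=j(ih)$ and then substitute $j\mapsto jh$ to obtain the automorphism identity $(ji)h=(jh)(ih)$, giving both directions at once. Your parenthetical remark that $h$ fixes $1$ is a harmless extra observation not present in the paper.
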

\begin{proof}
The following conditions, all universally quantified for $j\in Q$, are equivalent for $i$ and $h$: $R_i^h = R_{ih}$, $jh^{-1}R_ih = jR_{ih}$, $(jh^{-1}\cdot i)h = j(ih)$, $(ji)h = jh\cdot ih$.
\end{proof}

\begin{corollary}\label{Cr:ALoop}
A loop $Q$ is automorphic if and only if $R_i^h = R_{ih}$ for every $i\in Q$ and $h\in \inn{Q}$.
\end{corollary}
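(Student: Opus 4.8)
The plan is to derive this directly from Lemma~\ref{Lm:ACharacterization} together with the definition of an automorphic loop. By definition, $Q$ is automorphic precisely when $\inn{Q}\le\aut{Q}$, that is, when every $h\in\inn{Q}$ is an automorphism of $Q$. So first I would fix an arbitrary $h\in\inn{Q}$ and apply Lemma~\ref{Lm:ACharacterization}, which characterizes membership in $\aut{Q}$ among all permutations of $Q$: the permutation $h$ lies in $\aut{Q}$ if and only if $R_i^h=R_{ih}$ for every $i\in Q$. Then I would simply quantify this equivalence over all $h\in\inn{Q}$: the inclusion $\inn{Q}\le\aut{Q}$ holds if and only if, for every $h\in\inn{Q}$ and every $i\in Q$, we have $R_i^h=R_{ih}$, which is exactly the asserted condition. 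That completes the proof.

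One remark worth including is that no reduction to generators of $\inn{Q}$ is needed, since the statement is phrased for all $h\in\inn{Q}$; however, by Lemma~\ref{Lm:ACharacterization} the set of permutations $h$ of $Q$ satisfying $R_i^h=R_{ih}$ for all $i\in Q$ is precisely the subgroup $\aut{Q}$, so in practice one may verify the condition only on the generating right, left, and middle inner mappings. There is no genuine obstacle here: the corollary is an immediate restatement of the preceding lemma, and the proof is essentially a single line once Lemma~\ref{Lm:ACharacterization} is available.
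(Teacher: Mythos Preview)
Your proof is correct and matches the paper's approach exactly: the corollary is stated without proof in the paper because it is an immediate consequence of Lemma~\ref{Lm:ACharacterization} and the definition of an automorphic loop, precisely as you have written it out.
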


Here is a summary of results that will be used to explain the algorithm of \S \ref{Sc:Algorithm}:

\begin{proposition}\label{Pr:ForASearch}
Let $Q$ be a loop and $H\le\aut{Q}$. Then
\begin{enumerate}
\item[(i)] $R_i^h = R_{ih}$ for every $i\in Q$ and $h\in H$,
\item[(ii)] $|R_i^H| = |iH|$ for every $i\in Q$,
\item[(iii)] there exists $I\subseteq Q$ such that $\sum_{i\in I} |iH| = |Q|$ and such that
$\{R_i;\;i\in Q\}$ is the disjoint union $\bigcup_{i\in I} R_i^H$,
\item[(iv)] $R_i$ commutes with every element of the stabilizer $H_i$, for $i\in Q$,
\item[(v)] $R_iR_j^{-1}$ is fixed point free for every distinct $i$, $j\in Q$.
\end{enumerate}
\end{proposition}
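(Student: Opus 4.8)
The plan is to obtain all five items from Lemma~\ref{Lm:ACharacterization} together with elementary loop theory. The organizing observation is that $1R_i = i$ for every $i\in Q$, so $i\mapsto R_i$ is a bijection of $Q$ onto $\{R_i;\;i\in Q\}$; and since each $h\in H$ is an automorphism, Lemma~\ref{Lm:ACharacterization} shows that this bijection carries the natural action of $H$ on $Q$ to the conjugation action of $H$ on $\{R_i;\;i\in Q\}$.

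Item (i) is then just Lemma~\ref{Lm:ACharacterization} applied to each $h\in H\le\aut{Q}$. For (ii), item (i) identifies the conjugation orbit $R_i^H$ with $\{R_{ih};\;h\in H\}$, which corresponds under $i\mapsto R_i$ to $iH$; hence $|R_i^H| = |iH|$. For (iii), take $I$ to be a transversal of the orbits of $H$ acting on $Q$, so that $\sum_{i\in I}|iH| = |Q|$; by (i) the sets $R_i^H$, $i\in I$, are precisely the (pairwise disjoint) orbits of the conjugation action of $H$ on $\{R_i;\;i\in Q\}$, so they partition $\{R_i;\;i\in Q\}$, and by (ii) their sizes sum to $\sum_{i\in I}|iH| = |Q|$. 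For (iv), if $h\in H_i$ then $ih=i$, whence $R_i^h = R_{ih} = R_i$ by (i), i.e.\ $R_i$ commutes with $h$.

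Item (v) is independent of the hypothesis $H\le\aut{Q}$: it records merely that left translations of a loop are injective (as already used in the proof of Lemma~\ref{Lm:LoopByRightSection}). If $kR_iR_j^{-1} = k$ for some $k\in Q$, then $kR_i = kR_j$, that is, $ki = kj$, so $i = j$ after applying $L_k^{-1}$; thus $R_iR_j^{-1}$ is fixed point free whenever $i\neq j$. No step here presents a genuine difficulty—the proposition is really a checklist assembled for the algorithm of \S\ref{Sc:Algorithm}—and the only point meriting care is the bookkeeping in (iii): one must verify that the bijection $i\mapsto R_i$ genuinely transports orbit decompositions and orbit sizes from $(Q,H)$ to $(\{R_i;\;i\in Q\},H)$, so that distinct orbit representatives in $Q$ yield distinct conjugation orbits among the $R_i$.
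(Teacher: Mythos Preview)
Your proof is correct and follows essentially the same approach as the paper, which likewise derives (i)--(iii) from Lemma~\ref{Lm:ACharacterization}, (v) from Lemma~\ref{Lm:LoopByRightSection}, and (iv) by a short direct argument. Your treatment of (iv) via (i) (namely $R_i^h=R_{ih}=R_i$ when $ih=i$) is in fact a slightly cleaner packaging than the paper's explicit element-by-element verification $jhR_i=jh\cdot i=jh\cdot ih=(ji)h=jR_ih$, but the underlying idea is identical.
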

\begin{proof}
Parts (i), (ii) and (iii) follow from Lemma \ref{Lm:ACharacterization}. Let $h\in H_i$. Then for every $j\in Q$ we have $jhR_i = jh\cdot i = jh\cdot ih = (ji)h = jR_ih$, proving (iv). Part (v) follows from Lemma \ref{Lm:LoopByRightSection}.
\end{proof}

\section{Loops with prescribed automorphisms}\label{Sc:Algorithm}

Let $G$ be a transitive permutation group on a finite set $Q$, and let $H=G_1$. The following algorithm efficiently searches for all loops $Q=(Q,*)$ (with fixed neutral element $1$) such that $\rmlt{Q}\le G$ and $H\le\aut{Q}$. The loops $(Q,*)$ will be constructed by means of the set $R =\{r_i;\;i\in Q\}\subseteq G$, where $r_i$ will be the right translation by $i$ in $(Q,*)$. As usual, we can assume without loss of generality that $ir_1 = 1r_i=i$ for every $i\in Q$. All references in the algorithm are to Proposition \ref{Pr:ForASearch}:

\bigskip
\hrule
\bigskip

\begin{algorithm}\label{Al:Main}
\end{algorithm}

\emph{Step 1}: Set $r_1=1_G$. Find $I\subseteq Q$ such that the disjoint union $\bigcup_{i\in I} iH$ is equal to $Q\setminus\{1\}$.

\emph{Step 2}: For $i\in I$, find $\mathcal R_i$, the set consisting of all candidates $r_i$ for the right translation by $i$, as follows: By (iv), (v) and the fact that $r_1=1_G\in R$, $r_i$ must be fixed point free, in the centralizer $C_G(H_i)$, and such that $1r_i=i$. If there is no such $r_i$, the algorithm stops with failure. Else it suffices to find one such $r_i$, and set
$\mathcal R_i$ equal to the coset $(C_G(H_i))_1r_i$, because $1s=i$ if and only if $1sr_i^{-1}=1$.

\emph{Step 3}: For $i\in I$, find $\mathbf R_i$, the set consisting of candidate orbits $r_i^H$ with $r_i\in\mathcal R_i$, as follows: Let $r_i\in\mathcal R_i$. If $|r_i^H|\ne |iH|$, discard $r_i$, by (ii). If there is $s\in r_i^H$ such that $s\ne r_i$ and $sr_i^{-1}$ is not fixed point free, discard $r_i$, by (v). Else add $r_i^H$ into $\mathbf R_i$.

\emph{Step 4}: For $i$, $j\in I$, decide which pairs $r_i^H\in \mathbf R_i$, $r_j^H\in \mathbf R_j$ do not contradict (v): Call two candidate orbits $r_i^H$, $r_j^H$ with $i\ne j$ \emph{compatible} if $st^{-1}$ is fixed point free for every $s\in r_i^H$ and $t\in r_j^H$. Compatibility is a symmetric relation, so it suffices to consider orbits $r_i^H$, $r_j^H$ with $i<j$. To decide if $r_i^H$, $r_j^H$ with $i<j$ are compatible, it suffices to check that all permutations in $r_j^Hr_i^{-1}$ (rather than in $r_j^H(r_i^H)^{-1}$) are fixed point free. Indeed, if $kr_i^{h_i} = kr_j^{h_j}$ for some $k\in Q$ and $h_i$, $h_j\in H$, then $(kh_i^{-1})r_i = (kh_i^{-1})r_j^{h_jh_i^{-1}}$.

\emph{Step 5}: Put together pairwise compatible candidate orbits to form the set of loop translations $\{r_i;\;1<i\in Q\}$: This can be done elegantly with the use of graph algorithms in the GAP package GRAPE \cite{GRAPE}. The compatibility relation from Step 4 corresponds to the edges of a graph $\mathcal G$ whose vertices are the candidate orbits $r_i^H$. Assign vertex weight $|r_i^H|$ to $r_i^H$, and return all complete subgraphs of $\mathcal G$ whose vertex weights add up to $|Q|-1$.

\bigskip
\hrule
\bigskip

Here is the GAP code for the algorithm (it can be downloaded from the web site of the last author, \texttt{http://www.math.du.edu/\~{}petr}):

\begin{small}
\begin{verbatim}
RightAutomorphicLoopsWithPrescribedAutomorphisms := function(g)
# returns all loops Q whose right multiplication group is a subgroup of g
# and Stabilizer(g, 1) is a subgroup of Aut(Q)
    local d, h, c, v, ls, x, i, j, k, orbs, orbit, new, graph, comp, cs;
    # Step 1
    d := NrMovedPoints(g);
    h := Stabilizer(g, 1);
    orbs := Set(Orbits(h, [2..d]), Set);
    # Steps 2 and 3
    ls := [];
    for orbit in orbs do
        c := Centralizer(g, Stabilizer(h, orbit[1]));
        v := RepresentativeAction(c, 1, orbit[1]);
        if v <> fail then
            v := RightCoset(Stabilizer(c, 1), v);
            v := Filtered(v, x -> NrMovedPoints(x) = d);
            new := [];
            for x in v do
                k := Orbit(h, x);
                if ForAll(k/k[1], y -> NrMovedPoints(y) in [0,d])
                    and Length(k) = Length(orbit)
                    then Add(new, k);
                fi;
            od;
            Add(ls, new);
        fi;
    od;
    ls := Concatenation(ls);
    # Step 4
    comp := List([1..Length(ls)], i -> []);
    for i in [1..Length(ls)] do
        for j in [1..i] do
            if ForAll(ls[i]/ls[j][1], p -> NrMovedPoints(p) = d) then
                comp[i][j] := true;
                comp[j][i] := true;
            else
                comp[i][j] := false;
                comp[j][i] := false;
            fi;
        od;
    od;
    # Step 5
    graph := Graph(Group(()), [1..Length(comp)], OnPoints,
        function(x, y) return comp[x][y]; end);
    cs := CompleteSubgraphsOfGivenSize(
            graph, d-1, 1, false, true, List(ls, Length));
    cs := List(cs, x -> VertexNames(graph){x});
    cs := List(cs, x -> Concatenation(ls{x}));
    cs := List(cs, x -> SortedList(Concatenation([()], x)));
    return cs;
end;
\end{verbatim}
\end{small}

\section{The search for simple (right) automorphic loops}\label{Sc:Search}

Since the algorithm of \S \ref{Sc:Algorithm} is delicate, we first present some comments and then give the results. We assume that the input of the algorithm is a permutation group $G$ primitive on the set $Q$.

\subsection{Discussion of the algorithm}

\emph{The algorithm returns all loops $Q=(Q,*)$ such that $\rmlt{Q}\le G$ and $G_1=H\le\aut{Q}$.} Indeed, the inclusion $\rmlt{Q}\le G$ is obvious. Step 3 and Lemma \ref{Lm:ACharacterization} guarantee that $H\le\aut{Q}$, since for every $h\in H$ and $i\in Q$ we have $1r_i^h = 1h^{-1}r_ih = 1r_ih = ih$, hence $r_{ih} = r_i^h$. Steps 3, 4 and 5 guarantee that every $r_ir_j^{-1}$ with $i\ne j$ is fixed point free, so $(Q,*)$ is a loop by Lemma \ref{Lm:LoopByRightSection}.

\emph{All returned loops are right automorphic.} We have $\rmlt{Q}\le G$, so $\rinn{Q}\le G_1\le\aut{Q}$.

\emph{Not all returned loops are necessarily simple.} The condition $\rmlt{Q}\le G$ does not guarantee that either $\rmlt{Q}$ or $\mlt{Q}$ is primitive. If it happens that $\rmlt{Q}=G$ then both $\rmlt{Q}$ and $\mlt{Q}$ are primitive, hence $Q$ is simple.

\emph{Not all finite simple right automorphic loops are found.} Let $Q$ be a simple right automorphic loop and $G=\mlt{Q}$. Then the algorithm with input $G$ returns $Q$ if and only if $\inn{Q}=G_1\le\aut{Q}$, that is, if and only if $Q$ is automorphic. Thus, when $Q$ is right automorphic but not automorphic, it will not be found with input $G$, but it \emph{could} be found with a different primitive group as the input. Furthermore, if $Q$ is a simple right automorphic that is not automorphic and if $\rmlt{Q}$ is imprimitive (such loops exist), then $Q$ will not be found by the algorithm applied to any primite group.

\emph{We can skip $4$-transitive groups.} If $G$ is $4$-transitive then $H=G_1$ is $3$-transitive and no non-associative loop $Q$ with $H\le\aut{Q}$ exists, by Lemma \ref{Lm:4Transitive}.

\emph{While searching for simple automorphic loops, it suffices to consider groups of even degree.} By a result of \cite{KKPV}, a non-associative finite simple automorphic loop is of even order.

\emph{While searching for simple commutative automorphic loops, it suffices to consider groups of degree a power of two.} By \cite[Proposition 6.1 and Theorem 6.2]{JKV1}, a non-associative finite simple commutative automorphic loop is of order a power of two.

\emph{While searching for simple automorphic loops, we can skip solvable groups.} Vesanen proved \cite{VeJA} that any loop with solvable multiplication group is itself solvable. Hence if $Q$ is a non-associative simple automorphic loop then $G=\mlt{Q}$ is not solvable, and $Q$ will be found by the algorithm with input $G$. (Of course, $Q$ could also be found by the algorithm with some solvable group as the input.)

\emph{While systematically searching for simple automorphic loops, it is not necessary to check that all inner mappings are automorphisms.} If $\mlt{Q}=G$ then $\inn{Q}\le\aut{Q}$ and $Q$ is a simple automorphic loop. If $\mlt{Q}\ne G$ then we can ignore $Q$ because either $\mlt{Q}$ is not primitive (and $Q$ is not simple), or $\mlt{Q}$ is primitive, in which case $Q$ will be found again by the algorithm with input $\mlt{Q}$.

\subsection{Results}

\emph{Simple right automorphic loops.}

We found all non-associative simple right automorphic loops $Q$ up to isomorphism with the following properties: $|Q|<504$, there exists a primitive group $G$ of degree $|Q|$ such that $\rmlt{Q}\le G$ and $G_1\le\aut{Q}$.

The following table summarizes the results:
\begin{displaymath}
\begin{array}{c||c|c|c|c|c|c|c|c|c|c|c}
\text{order}&15&27&60&64&81&125&168&243&256&343&360\\
\hline
\text{found loops}&1&1&5&1&2&6&11&60&2&28&17
\end{array}\quad.
\end{displaymath}

To do this, it suffices to (i) apply Algorithm \ref{Al:Main} to all primitive groups $G$ of degree less than $504$ that are not $4$-transitive, (ii) to filter the resulting loops for simplicity, (iii) to filter all remaining loops up to isomorphism.

Concerning (ii): In most cases we quickly observe $\rmlt{Q}=G$, which means that $G$ is simple. In the few remaining cases when $\rmlt{Q}<G$ we must calculate $\mlt{Q}$ and check for primitivity.

Concerning (iii): We used the isomorphism filter for loops built into the Loops \cite{LOOPS} package of GAP. The isomorphism filtering takes up most of the running time of the search, and it is one of the reasons why we decided to stop at order $504$. The other reason is that the search does not find all non-associative simple right automorphic loops, as we have already pointed out in the Introduction, so it is not clear how useful the results are for large orders.

The running time of the search was about 30 minutes on a 2GHz processor PC.

\emph{Simple automorphic loops.}

By running the algorithm on all primitive groups of even degree less than $2500$ that are neither $4$-transitive nor solvable, we established Theorem \ref{Th:MainA}, a somewhat surprising result.

The running time of the search was about 20 minutes.

\emph{Simple commutative automorphic loops.}

Theorem \ref{Th:MainC} now follows, too. But it is possible to obtain it faster, by running the algorithm on all primitive groups of degree a power of two and less than $2^{12}$ that are neither $4$-transitive nor solvable.

The running time of the search was about $2$ minutes.

\section{Right automorphic loops and conjugations}\label{Sc:Conj}

As discussed in \S \ref{Sc:Search}, it is never necessary to check that inner mappings are automorphisms while systematically searching for simple automorphic loops by Algorithm \ref{Al:Main}. But it is necessary to run the check if we wish to find all automorphic loops with $\rmlt{Q}\le G$ and $G_1\le \aut{Q}$ for a fixed primitive group $G$. The following result, which is of independent interest, shows that it is not necessary to check the left inner mappings.

\begin{theorem}\label{Th:RightPlusConjugations}
Let $Q$ be a right automorphic loop. Then $Q$ is automorphic if and only if all conjugations $T_x$, $x\in Q$, are automorphisms of $Q$.
\end{theorem}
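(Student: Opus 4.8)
The forward direction is trivial: if $Q$ is automorphic then $\inn{Q}\le\aut{Q}$, and in particular every conjugation $T_x = R_xL_x^{-1}$ lies in $\inn{Q}\le\aut{Q}$. So the content is the converse: assuming $Q$ is right automorphic (all $R_{x,y}\in\aut{Q}$) and all $T_x\in\aut{Q}$, we must show all left inner mappings $L_{x,y} = L_xL_yL_{yx}^{-1}$ are automorphisms, which then gives $\inn{Q}\le\aut{Q}$ since $\inn{Q}$ is generated by the $R_{x,y}$, $L_{x,y}$ and $T_x$.

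The plan is to express each left inner mapping $L_{x,y}$ as a product of right inner mappings and conjugations. The cleanest route is through the conjugations: recall that $T_x = R_xL_x^{-1}$, equivalently $L_x = R_xT_x^{-1}$, and dually $R_x = L_xT_x$. Substituting $L_x = R_xT_x^{-1}$ into $L_{x,y} = L_xL_yL_{yx}^{-1}$ gives
\begin{displaymath}
  L_{x,y} = R_xT_x^{-1}R_yT_y^{-1}(R_{yx}T_{yx}^{-1})^{-1} = R_xT_x^{-1}R_yT_y^{-1}T_{yx}R_{yx}^{-1}.
\end{displaymath}
Now I would use the fact that $T_x\in\aut{Q}$ to rewrite the conjugated translations: for any automorphism $h$ and any $z\in Q$, Lemma \ref{Lm:ACharacterization} gives $R_z^h = R_{zh}$, i.e. $h^{-1}R_zh = R_{zh}$, hence $R_zh = hR_{zh}$. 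Applying this with $h = T_x^{-1}$ (also an automorphism, since $\aut{Q}$ is a group) lets me push the $T$'s past the $R$'s, each time at the cost of changing the subscript of $R$ by applying an automorphism to it. Carrying this through, $L_{x,y}$ becomes a word of the form $R_a R_b^{-1}\cdots$ times a product of $T$'s with shifted subscripts. The key point is that the leftover product of conjugations must be trivial on $1$ — indeed $L_{x,y}$ fixes $1$, and so does every factor $R_{a_1}R_{a_2}\cdots R_{a_k}^{-1}$ obtained provided the subscripts multiply correctly — and more importantly must itself be an automorphism (product of automorphisms), while the remaining $R$-word is then forced to be an automorphism too; but a product of right translations that is an automorphism and fixes $1$ lies in $\rinn{Q}\le\aut{Q}$ automatically. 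The honest statement to aim for is: $L_{x,y} = \tau\circ\rho$ where $\tau$ is a product of conjugations $T_z$ (all automorphisms by hypothesis) and $\rho$ is a product of right translations $R_{z}^{\pm1}$ that, being a product of automorphisms in $\mlt{Q}$ fixing $1$, lies in $\rinn{Q}$ and hence is an automorphism by right automorphicity. Then $L_{x,y}\in\aut{Q}$.

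Concretely, I expect the identity to take the shape
\begin{displaymath}
  L_{x,y} = T_x^{-1}\bigl(R_{xT_x^{-1}}R_{yT_x^{-1}}R_{(yx)T_x^{-1}}^{-1}\bigr)T_y^{-1}T_{yx}\cdot(\text{correction})
\end{displaymath}
after sliding $T_x^{-1}$ leftward past $R_x$ via $R_xT_x^{-1} = T_x^{-1}R_{xT_x^{-1}}$, and similarly reorganizing the tail; the parenthesized factor is visibly of the form $R_aR_bR_{ab}^{-1} = R_{a,b}\in\rinn{Q}\le\aut{Q}$ once one checks $aT_x\cdot bT_x = (ab)T_x$ forces the subscripts to line up, which holds precisely because $T_x$ is an automorphism. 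I would verify carefully that after all the sliding, every $R$-factor groups into genuine right inner mappings $R_{a,b}$ (so that right automorphicity applies) and every stray $T$-factor is a genuine conjugation (so the hypothesis applies).

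The main obstacle will be the bookkeeping in the sliding step: each time a conjugation is moved past a right translation the subscript is transformed by that automorphism, and one must ensure these transformed subscripts reassemble into valid $R_{a,b}$ patterns rather than an uncontrolled product of right translations. The conceptual safeguard is that \emph{any} element of $\mlt{Q}$ that fixes $1$ and is built from right translations lies in $\rmlt{Q}\cap(\text{stabilizer of }1) = \rinn{Q}$, which is contained in $\aut{Q}$ by hypothesis; so even if the intermediate $R$-word does not split neatly into inner mappings, as long as it fixes $1$ and lies in $\rmlt{Q}$ it is an automorphism. Making this last observation precise — that the residual right-translation word indeed lies in $\rinn{Q}$ — is the crux, and it follows because $L_{x,y}$ fixes $1$, the $T$-factors fix $1$, and hence the $R$-word fixes $1$, while it manifestly lies in $\rmlt{Q}$.
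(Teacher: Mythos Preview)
Your argument is correct, and the crisp version is exactly what you arrive at in your last paragraph: since $L_x=R_xT_x^{-1}$, the word $L_{x,y}=R_xT_x^{-1}R_yT_y^{-1}T_{yx}R_{yx}^{-1}$ lies in the group generated by $\rmlt{Q}$ and the conjugations; by hypothesis each $T_z\in\aut{Q}$, and Lemma~\ref{Lm:ACharacterization} shows $\aut{Q}$ normalizes $\rmlt{Q}$, so one may factor $L_{x,y}=\tau\rho$ with $\tau$ a word in the $T_z^{\pm1}$ and $\rho\in\rmlt{Q}$; as both $L_{x,y}$ and $\tau$ fix $1$, so does $\rho$, whence $\rho\in\rinn{Q}\le\aut{Q}$, and $L_{x,y}=\tau\rho\in\aut{Q}$. (Your middle paragraph momentarily garbles this---``$\rho$ \dots\ being a product of automorphisms'' is not what you mean, since right translations are not automorphisms---but your final paragraph states the argument cleanly, and no explicit tracking of the shifted subscripts is needed once you invoke normality of $\rmlt{Q}$.)

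This route is genuinely different from the paper's. The paper first shows that the hypothesis forces $Q$ to be flexible (Lemma~\ref{lem:flex}), then that flexible right automorphic loops satisfy the antiautomorphic inverse property (Proposition~\ref{Pr:AAIP}), and finally uses $R_y^J=L_{y^{-1}}$ together with $R_{x,y}^J=R_{x,y}$ to obtain the explicit identity $L_{x^{-1},y^{-1}}=R_{x,y}$. Your approach is shorter and more conceptual (it is really a statement about the semidirect structure $\rmlt{Q}\rtimes\langle T_z\rangle$), and it never needs inverses or flexibility. What it gives up is the structural byproduct the paper extracts along the way: that every automorphic loop has the antiautomorphic inverse property, and the identity $L_{x^{-1},y^{-1}}=R_{x,y}$ relating left and right inner mappings, both of which are of independent interest.
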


The rest of this section forms the proof of Theorem \ref{Th:RightPlusConjugations}.

Recall that a loop is \emph{flexible} if it satisfies the identity $xy\cdot x = x\cdot yx$, that is, $L_x R_x = R_x L_x$ for all $x$. Flexible loops have two-sided inverses. Indeed, if $x^{\lambda}$ and $x^{\rho}$ denote the left and right inverses of $x$, respectively, then $x=1\cdot x = xx^\rho\cdot x = x\cdot x^\rho x$ implies $x^\rho x = 1$, and since $x^\lambda x=1$, we have $x^\rho = x^\lambda = x^{-1}$.

\begin{lemma}
\label{lem:flex}
Let $Q$ be a loop in which every conjugation $T_x$, $x\in Q$, is an automorphism.
Then $Q$ is flexible.
\end{lemma}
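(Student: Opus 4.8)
The plan is to extract flexibility directly from the two structural features of a conjugation: the defining identity $x\cdot(yT_x)=yx$, and the fact that $T_x$ fixes $x$. Throughout I use the paper's convention that translations act on the right, so $yR_x=yx$, $yL_x=xy$, and hence $yT_x=yR_xL_x^{-1}$ is the unique element with $x\cdot(yT_x)=yx$.

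First I would record the relevant pointwise facts. From $x\cdot(yT_x)=yx$, setting $y=1$ gives $x\cdot(1T_x)=x$, so $1T_x=1$ (as it must be, $T_x$ being a bijection), and setting $y=x$ gives $x\cdot(xT_x)=x\cdot x$, hence $xT_x=x$. So $T_x$ fixes both $1$ and $x$.

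Next I would feed a carefully chosen product into the automorphism hypothesis. Applying the automorphism $T_x$ to the product $y\cdot x$ and using $xT_x=x$ yields $(yx)T_x=(yT_x)\cdot x$. On the other hand, the defining identity $x\cdot(wT_x)=wx$ applied with $w=yx$ gives $x\cdot\bigl((yx)T_x\bigr)=(yx)\cdot x$. Substituting the first into the second produces $x\cdot\bigl((yT_x)\cdot x\bigr)=(yx)\cdot x$; and since $x\cdot(yT_x)=yx$ as well, writing $a=yT_x$ we get $x\cdot(ax)=(xa)\cdot x$.

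Finally, because $T_x$ is a bijection of $Q$, the element $a=yT_x$ ranges over all of $Q$ as $y$ does, so $x\cdot(ax)=(xa)\cdot x$ holds for every $a,x\in Q$, which is exactly the flexible law $xy\cdot x=x\cdot yx$. I do not expect a genuine obstacle here; the only real idea is the choice to apply $T_x$ to $y\cdot x$ (rather than $x\cdot y$), so that the fixed point $xT_x=x$ collapses one of the factors, and the only thing requiring care is the bookkeeping forced by the right-action convention for translations.
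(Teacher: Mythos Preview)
Your argument is correct and is essentially the paper's proof written elementwise: both rest on the single observation that the automorphism $T_x$ fixes $x$, so that $(xy)T_x = x\cdot yT_x$ (the paper's version, i.e.\ $L_xT_x=T_xL_x$), or equivalently $(yx)T_x=(yT_x)\cdot x$ (your version). The paper then finishes via the operator chain $L_xR_x = L_x(T_xL_x) = (T_xL_x)L_x = R_xL_x$, which is exactly your two invocations of the defining relation $x\cdot(wT_x)=wx$ unpacked.
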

\begin{proof}
For $x$, $y\in Q$ we have $yL_xT_x = (xy)T_x = xT_x\cdot yT_x = x\cdot yT_x = yT_xL_x$ because $T_x$ is an automorphism. Thus $L_xT_x = T_xL_x$ and $L_x R_x = L_x T_x L_x = T_x L_x L_x = R_x L_x$.
\end{proof}

\begin{lemma}
\label{lem:switch}
Let $Q$ be a flexible, right automorphic loop. Then for all
$x\in Q$,
\begin{align}
R_{x,x^{-1}} &= R_{x^{-1},x}\,, \label{eq:switch1} \\
L_x R_{x^{-1}} &= R_{x^{-1}} L_x \,, \label{eq:switch2} \\
L_{x^{-1}} R_x^{-1} &= R_x^{-1} L_{x^{-1}} \,. \label{eq:switch3}
\end{align}
\end{lemma}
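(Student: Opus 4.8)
The plan is to work in the right multiplication group $\rmlt{Q}$, exploiting the fact that $\rinn{Q}\le\aut{Q}$ and that $Q$ is flexible (so two-sided inverses exist and $L_xR_x=R_xL_x$). For \eqref{eq:switch1}, I would start from the definitions $R_{x,x^{-1}} = R_xR_{x^{-1}}R_{xx^{-1}}^{-1} = R_xR_{x^{-1}}$ and $R_{x^{-1},x} = R_{x^{-1}}R_xR_{x^{-1}x}^{-1} = R_{x^{-1}}R_x$, using $xx^{-1}=x^{-1}x=1$ and $R_1=\mathrm{id}$. So \eqref{eq:switch1} is equivalent to the assertion that $R_x$ and $R_{x^{-1}}$ commute. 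The first idea is to evaluate the permutation $R_xR_{x^{-1}}$ at $1$: $1R_xR_{x^{-1}} = x\cdot x^{-1} = 1$, so $R_xR_{x^{-1}}\in\rinn{Q}\le\aut{Q}$; likewise $R_{x^{-1}}R_x$ fixes $1$ and is an automorphism. The cleanest route to commutativity is then to observe that $R_{x,x^{-1}}$, being an automorphism, fixes $x^{-1}$ iff it fixes $x$ (apply the automorphism to $xx^{-1}=1$), and to compute $x R_xR_{x^{-1}} = (x\cdot x)\cdot x^{-1}$; flexibility, and perhaps the antiautomorphic inverse property alluded to in the abstract, should let me identify this with $x$, forcing $R_{x,x^{-1}}$ and its inverse $R_{x^{-1},x}^{-1}$ to agree on a generating-enough set, hence \eqref{eq:switch1}. (If this particular shortcut stalls, the fallback is to write both sides as automorphisms and compare their values on $x$ directly, since an automorphism of a loop generated by one element — or its action on the relevant cosets — is pinned down by its value on $x$.)

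For \eqref{eq:switch2}, I would rewrite the target $L_xR_{x^{-1}} = R_{x^{-1}}L_x$ as $R_{x^{-1}}^{-1}L_xR_{x^{-1}} = L_x$, i.e. $L_x$ is centralized by conjugation by $R_{x^{-1}}$. Since $R_{x^{-1}}$ is a translation and $Q$ is right automorphic, conjugation by $R_{x^{-1}}$ sends $L_x$ to some product of translations; concretely I would compute $yR_{x^{-1}}^{-1}L_xR_{x^{-1}} = (x\cdot(yx^{\text{-something}}))x^{-1}$ and use flexibility together with \eqref{eq:switch1} to collapse it to $xy$. Alternatively — and this is probably the slicker argument — apply the automorphism $R_{x,x^{-1}}=R_xR_{x^{-1}}$ to the product $x\cdot y$: since it is an automorphism, $(xy)R_xR_{x^{-1}} = (xR_xR_{x^{-1}})(yR_xR_{x^{-1}})$, and $xR_xR_{x^{-1}} = (x\cdot x)x^{-1}$ which by \eqref{eq:switch1} (commuting) equals $x\cdot(x x^{-1}) = x$; reading off the left-translation part of this identity gives exactly \eqref{eq:switch2}. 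Equation \eqref{eq:switch3} should then follow from \eqref{eq:switch2} by a symmetric manipulation or by inverting: replacing $x$ by $x^{-1}$ in \eqref{eq:switch2} gives $L_{x^{-1}}R_x = R_xL_{x^{-1}}$, and conjugating this relation by $R_x^{-1}$ (which preserves $L_{x^{-1}}$-type relations because $\rinn{Q}\le\aut{Q}$ forces the needed commutation) yields $L_{x^{-1}}R_x^{-1} = R_x^{-1}L_{x^{-1}}$; one must check that the conjugation step is legitimate, which is where right automorphy is used once more.

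The main obstacle I anticipate is \eqref{eq:switch1}: establishing that $R_x$ and $R_{x^{-1}}$ commute is the linchpin, and it is not purely formal — it genuinely needs flexibility (equivalently, that $T_x$ is an automorphism, via Lemma \ref{lem:flex}) and not merely right automorphy, since in a general right automorphic loop $R_xR_{x^{-1}}$ need not be trivial or central. I expect the key sub-step to be showing $(x\cdot x)\cdot x^{-1} = x$, or some equivalent "$x$-generated subloop is a group" type statement, after which the automorphism property of the relevant right inner mappings does the rest. Once \eqref{eq:switch1} is in hand, \eqref{eq:switch2} and \eqref{eq:switch3} are comparatively routine consequences obtained by feeding products of the form $x\cdot y$ through the automorphism $R_xR_{x^{-1}}$ and separating the left- and right-translation contributions.
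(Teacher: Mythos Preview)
Your plan for \eqref{eq:switch2} and \eqref{eq:switch3} is essentially the paper's argument: once \eqref{eq:switch1} is known, one has $xR_{x,x^{-1}}=xR_{x^{-1},x}=x$, the automorphism $R_{x,x^{-1}}$ therefore commutes with $L_x$, and a single application of flexibility (cancel an $R_x$ on the left of $R_xL_xR_{x^{-1}}=R_xR_{x^{-1}}L_x$) gives \eqref{eq:switch2}; \eqref{eq:switch3} is then literally $x\mapsto x^{-1}$ followed by moving $R_x$ across, no further use of right automorphy needed.

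The genuine gap is in \eqref{eq:switch1}. Two problems. First, you propose to verify $xR_{x,x^{-1}}=x$, i.e.\ $(xx)x^{-1}=x$, directly from flexibility ``and perhaps the antiautomorphic inverse property.'' The AAIP is proved \emph{from} this lemma in Proposition~\ref{Pr:AAIP}, so invoking it here is circular; and flexibility alone does not obviously collapse $(xx)x^{-1}$. The computation you actually want is the trivial one on the \emph{other} element: $x^{-1}R_{x,x^{-1}}=(x^{-1}x)x^{-1}=x^{-1}$. Your own ``fixes $x^{-1}$ iff fixes $x$'' remark then gives $xR_{x,x^{-1}}=x$ for free. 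Second, even after you know $R_{x,x^{-1}}$ fixes both $x$ and $x^{-1}$, your proposed finish---that $R_{x,x^{-1}}$ and ``its inverse $R_{x^{-1},x}^{-1}$'' agree on a generating set---does not work: $R_{x^{-1},x}=R_{x^{-1}}R_x$ is \emph{not} the inverse of $R_{x,x^{-1}}=R_xR_{x^{-1}}$, and in any case two automorphisms agreeing on $\{x,x^{-1}\}$ need not coincide. The missing step is the standard consequence of $\rinn{Q}\le\aut{Q}$ (Proposition~\ref{Pr:ForASearch}(iv)/Lemma~\ref{Lm:ACharacterization}): an automorphism fixing $x^{-1}$ commutes with $R_{x^{-1}}$. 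Hence $R_{x^{-1}}R_{x,x^{-1}}=R_{x,x^{-1}}R_{x^{-1}}$, and then
\[
R_{x^{-1},x}R_{x^{-1}}=R_{x^{-1}}R_xR_{x^{-1}}=R_{x^{-1}}R_{x,x^{-1}}=R_{x,x^{-1}}R_{x^{-1}},
\]
so cancelling $R_{x^{-1}}$ on the right gives \eqref{eq:switch1}. Note that flexibility is not used for \eqref{eq:switch1} at all; it enters only in passing from $L_xR_{x,x^{-1}}=R_{x,x^{-1}}L_x$ to \eqref{eq:switch2}.
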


\begin{proof}
Note that $x^{-1}R_{x,x^{-1}} = x^{-1}$, thus $yR_{x^{-1}}R_{x,x^{-1}} = (yx^{-1})R_{x,x^{-1}} =
yR_{x,x^{-1}}\cdot x^{-1}R_{x,x^{-1}} = yR_{x,x^{-1}}\cdot x^{-1} = yR_{x,x^{-1}}R_{x^{-1}}$, or $R_{x^{-1}}R_{x,x^{-1}} = R_{x,x^{-1}}R_{x^{-1}}$. Then $R_{x^{-1},x} R_{x^{-1}} = R_{x^{-1}} R_x R_{x^{-1}} = R_{x^{-1}} R_{x,x^{-1}} = R_{x,x^{-1}} R_{x^{-1}}$,
which yields \eqref{eq:switch1}.

Similarly, by \eqref{eq:switch1}, we have $x R_{x,x^{-1}} = x R_{x^{-1},x} = x$, therefore $yL_xR_{x,x^{-1}} = (xy)R_{x,x^{-1}} = xR_{x,x^{-1}}\cdot yR_{x,x^{-1}} = x\cdot yR_{x,x^{-1}} = yR_{x,x^{-1}}L_x$, or $L_xR_{x,x^{-1}} = R_{x,x^{-1}}L_x$. Then, by flexibility, $R_x L_x R_{x^{-1}} = L_x R_x R_{x^{-1}} = L_x R_{x,x^{-1}} = R_{x,x^{-1}}L_x = R_xR_{x^{-1}}L_x$, and \eqref{eq:switch2} follows.

Finally, \eqref{eq:switch3} follows from \eqref{eq:switch2} upon replacing $x$ with $x^{-1}$ and rearranging.
\end{proof}

A loop with two-sided inverses is said to have the \emph{antiautomorphic inverse property} if it satisfies the identity
$(xy)^{-1} = y^{-1} x^{-1}$. If we use $J$ to denote the inversion permutation $x\mapsto x^{-1}$, then the antiautomorphic inverse property is equivalent to $R_y^J = L_{y^{-1}}$ for all $y$, or to $L_y^J = R_{y^{-1}}$ for all $y$.

\begin{proposition}
\label{Pr:AAIP}
A flexible, right automorphic loop has the antiautomorphic inverse property.
\end{proposition}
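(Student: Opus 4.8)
The plan is to establish the reformulation recalled just before the statement: it is enough to prove $R_y^J = L_{y^{-1}}$ for every $y \in Q$, equivalently the identity $(xy)^{-1} = y^{-1}x^{-1}$. Two facts set the stage. Since $Q$ is flexible, $J$ is a well-defined involution, and every $\varphi \in \aut{Q}$ commutes with $J$, because $\varphi$ carries the unique two-sided inverse of $a$ to that of $a\varphi$; as $Q$ is right automorphic, this makes every right inner mapping $R_{x,y} = R_xR_yR_{xy}^{-1}$ commute with $J$. Second, Lemma \ref{Lm:ACharacterization} and its evident left-handed analogue assert that $h \in \aut{Q}$ if and only if $R_i^h = R_{ih}$ for all $i$, if and only if $L_i^h = L_{ih}$ for all $i$. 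Applying these to the automorphisms $R_{x,y}$, and invoking Lemma \ref{lem:switch}, I would first record that $R_{x,y}$ fixes $y$ (this is flexibility: $yR_{x,y} = ((yx)y)R_{xy}^{-1} = (y(xy))R_{xy}^{-1} = y$) and fixes $y^{-1}$ (this is \eqref{eq:switch3}, which gives $(y^{-1}x)y = y^{-1}(xy)$ and hence $y^{-1}R_{x,y} = y^{-1}$); consequently $R_{x,y}$ fixes every power of $y$, and commutes with each of $R_y$, $R_{y^{-1}}$, $L_y$, $L_{y^{-1}}$.

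The computational engine is to evaluate such an automorphism $R_{a,b}$ at an element $c$ it fixes and then expand $R_{a,b} = R_aR_bR_{ab}^{-1}$: from $cR_{a,b} = c$ one reads off the relation $c(ab) = (ca)b$, and the switch identities allow the resulting products to be simplified and cancelled. The diagonal case is already handled this way. From \eqref{eq:switch1} evaluated at $x$ one gets $x^2 x^{-1} = x$, so $R_{x^2, x^{-1}} = R_{x^2}R_{x^{-1}}R_x^{-1}$; since this automorphism fixes $x^{-1}$ it fixes $(x^{-1})^2$, and unravelling $(x^{-1})^2 R_{x^2}R_{x^{-1}}R_x^{-1} = (x^{-1})^2$ gives $(x^{-1})^2 \cdot x = ((x^{-1})^2 x^2)x^{-1}$. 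But $(x^{-1})^2 \cdot x = x^{-1}$ by \eqref{eq:switch3}, so cancelling $R_{x^{-1}}$ yields $(x^{-1})^2 x^2 = 1$, i.e.\ the special case $(x^2)^{-1} = (x^{-1})^2$ of the antiautomorphic inverse property.

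The crux of the argument, and the step I expect to be the main obstacle, is the general identity $(xy)^{-1} = y^{-1}x^{-1}$ for distinct $x$ and $y$. The natural bookkeeping device is the relation $S_aS_b = R_{a,b}S_{ab}$, where $S_c := JR_cJ$, obtained by conjugating $R_{a,b} = R_aR_bR_{ab}^{-1}$ by $J$; one checks directly that $1S_a = a^{-1}$, $aS_a = 1$, and $1S_aS_b = (ab)^{-1}$, so that $S_a = L_{a^{-1}}$ for all $a$ is precisely the antiautomorphic inverse property. The difficulty is that, in contrast to the diagonal case --- where $y^{-1}x^{-1} = (x^{-1})^2$ is a power of a fixed point, which is exactly what made the computation go through --- for distinct $x,y$ no single right inner mapping visibly fixes $y^{-1}x^{-1}$. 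One must therefore combine information from more than one such automorphism (for instance one of the form $R_{?,\,y^{-1}}$ with one of the form $R_{?,\,(xy)^{-1}}$), or chain substitutions through the relations $c(ab) = (ca)b$ and \eqref{eq:switch1}--\eqref{eq:switch3}, arranging the parenthesizations so that everything collapses; this is where all the care is required. Once $(xy)^{-1} = y^{-1}x^{-1}$ is proved, it is, as recalled above, equivalent to $R_y^J = L_{y^{-1}}$ for all $y$, and the proof is complete.
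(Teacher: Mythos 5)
Your preliminaries are sound: the observation that automorphisms of a flexible loop commute with $J$, the fixed points $y$ and $y^{-1}$ of $R_{x,y}$, and the diagonal computation giving $(x^2)^{-1}=(x^{-1})^2$ are all correct and in the spirit of the paper. But the proposal stops exactly at the step the proposition actually requires: you state that for distinct $x,y$ "one must combine information from more than one such automorphism \ldots\ this is where all the care is required," without carrying out that combination. Since the antiautomorphic inverse property \emph{is} the general identity $(xy)^{-1}=y^{-1}x^{-1}$, what you have written is a correct reduction of the problem to itself plus a solved special case; the main content is missing. This is a genuine gap, not a presentational one.

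For comparison, the paper closes this gap by a single directed computation of precisely the kind you gesture at. Starting from \eqref{eq:switch3} applied to $y^{-1}$, namely $(x^{-1}y^{-1})R_x^{-1}=y^{-1}R_x^{-1}L_{x^{-1}}$, one applies the automorphism $R_{x,y}$ to both sides. The left side collapses because $R_x^{-1}R_{x,y}=R_yR_{xy}^{-1}$; the right side is expanded multiplicatively using the automorphism property together with $x^{-1}R_{x,y}=yR_{xy}^{-1}$ and $y^{-1}R_x^{-1}R_{x,y}=(xy)^{-1}$. After cancelling with the commutation $R_{xy}R_{(xy)^{-1}}=R_{(xy)^{-1}}R_{xy}$ from \eqref{eq:switch1}, this yields $(x^{-1}y^{-1})R_y=xR_yJL_y$. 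A second, independent evaluation of the same quantity, $(x^{-1}y^{-1})R_y=xJR_{y^{-1},y}=xR_{y,y^{-1}}J=xR_yR_{y^{-1}}J$ (using that $R_{y^{-1},y}$ is an automorphism commuting with $J$, plus \eqref{eq:switch1}), then forces $JL_y=R_{y^{-1}}J$, which is the antiautomorphic inverse property. So your instinct---evaluate a right inner mapping, exploit that it is an automorphism, and let the switch identities cancel the debris---is exactly right, but the specific choice of starting identity and the second evaluation of $(x^{-1}y^{-1})R_y$ are the nontrivial inputs that your write-up does not supply.
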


\begin{proof}
By \eqref{eq:switch3}, $(x^{-1} y^{-1}) R_x^{-1} = y^{-1} R_x^{-1} L_{x^{-1}}$. Let us apply $R_{x,y}$ to both sides. On the left side we get $(x^{-1} y^{-1}) R_x^{-1} R_{x,y} = (x^{-1} y^{-1}) R_y R_{xy}^{-1}$. On the right side we get $y^{-1}R_x^{-1}L_x^{-1}R_{x,y} = (x^{-1}\cdot y^{-1}R_x^{-1})R_{x,y} = x^{-1}R_{x,y}\cdot y^{-1}R_x^{-1}R_{x,y} = yR_{xy}^{-1}\cdot (xy)^{-1} = yR_{xy}^{-1}R_{(xy)^{-1}}$.

Hence $(x^{-1} y^{-1}) R_y R_{xy}^{-1} = y R_{xy}^{-1} R_{(xy)^{-1}}$, so $(x^{-1} y^{-1}) R_y = y R_{xy}^{-1} R_{(xy)^{-1}} R_{xy} =
y R_{xy}^{-1} R_{xy} R_{(xy)^{-1}} = y R_{(xy)^{-1}} = x R_y J L_y$, where we have used \eqref{eq:switch1} in the second equality.
Now, $(x^{-1} y^{-1}) R_y = x J R_{y^{-1},y} = x R_{y^{-1},y} J = x R_{y,y^{-1}} J = x R_y R_{y^{-1}} J$, using the fact that $R_{y^{-1},y}$ is an automorphism in the second equality, and \eqref{eq:switch1} in the third.

Thus we have $R_y J L_y = R_y R_{y^{-1}} J$, or $L_y^J = R_{y^{-1}}$, which is the antiautomorphic inverse property.
\end{proof}

Combining Lemma \ref{lem:flex} and Proposition \ref{Pr:AAIP} yields:

\begin{theorem}
Every automorphic loop has the antiautomorphic inverse property.
\end{theorem}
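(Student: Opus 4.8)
The plan is to recognize that this theorem is an immediate corollary of Lemma~\ref{lem:flex} and Proposition~\ref{Pr:AAIP}, the only task being to verify that an automorphic loop satisfies the hypotheses of both. So the proof will have essentially two short steps plus an invocation.

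First I would establish flexibility. If $Q$ is automorphic, then $\inn{Q}\le\aut{Q}$ by definition, and since every conjugation $T_x = R_xL_x^{-1}$ is a generator of $\inn{Q}$, each $T_x$ is an automorphism of $Q$. Thus the hypothesis of Lemma~\ref{lem:flex} is met, and $Q$ is flexible; in particular $Q$ has two-sided inverses, so the phrase ``antiautomorphic inverse property'' makes sense for $Q$. Second, I would note that every right inner mapping $R_{x,y} = R_xR_yR_{xy}^{-1}$ lies in $\rinn{Q}\le\inn{Q}\le\aut{Q}$, so $Q$ is right automorphic.

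With both facts in hand, Proposition~\ref{Pr:AAIP} applies verbatim to the flexible, right automorphic loop $Q$, yielding $(xy)^{-1} = y^{-1}x^{-1}$ for all $x$, $y\in Q$, which is the antiautomorphic inverse property; equivalently $L_y^J = R_{y^{-1}}$ for all $y$.

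I do not expect any real obstacle here: all of the work has already been done in the three preceding results (the commuting identity $L_xT_x = T_xL_x$ of Lemma~\ref{lem:flex}, the three switch identities of Lemma~\ref{lem:switch}, and the $R_{x,y}$-manipulations of Proposition~\ref{Pr:AAIP}). The combination step requires only the trivial observations that conjugations and right inner mappings are among the generators of $\inn{Q}$, so that ``automorphic'' implies both ``all $T_x$ are automorphisms'' and ``right automorphic.'' If anything needs care, it is merely making explicit that flexibility is what guarantees the two-sided inverse $x^{-1}$ used in the statement, which is recorded in the paragraph preceding Lemma~\ref{lem:flex}.
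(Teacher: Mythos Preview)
Your proposal is correct and follows exactly the paper's own approach: the paper states the theorem as an immediate consequence of Lemma~\ref{lem:flex} and Proposition~\ref{Pr:AAIP}, and you have simply spelled out why an automorphic loop satisfies the hypotheses of both (all $T_x$ are automorphisms, and $\rinn{Q}\le\inn{Q}\le\aut{Q}$). There is nothing to add.
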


We can now finish the proof of Theorem \ref{Th:RightPlusConjugations} as follows: The necessity of the condition is obvious, so let us prove sufficiency. By Lemma \ref{lem:flex}, $Q$ is flexible. By Proposition \ref{Pr:AAIP}, $Q$ has the antiautomorphic inverse property. For each $x$, $y\in Q$, $R_{x,y}^J = JR_{x,y}J = R_{x,y}$, since $R_{x,y}$ is an automorphism. Hence $R_{x,y} = R_{x,y}^J = R_x^J R_y^J (R_{xy}^{-1})^J =  R_x^J R_y^J (R_{xy}^J)^{-1} = L_{x^{-1}} L_{y^{-1}} L_{(xy)^{-1}}^{-1} = L_{x^{-1}} L_{y^{-1}} L_{y^{-1}x^{-1}}^{-1} = L_{x^{-1},y^{-1}}$ by the antiautomorphic inverse property. This implies that every inner mapping of $Q$ is an automorphism, and we are through.

\section{Additional theoretical results}\label{Sc:Theory}

We have by now proved the theorems from the Introduction by a computer search based on Algorithm \ref{Al:Main} and several theoretical results. However, as we are going to explain next, some degrees $d=2^m$ can be eliminated without such a search, by taking advantage of certain results about primitive groups of affine type and permutation groups realizable as multiplication groups of loops. We will still need some computer calculations, but only of rather routine character, such as determining the size of the smallest nontrivial conjugacy class of a given group.

\subsection{Groups that are (not) multiplication groups of loops}

The question of which (transitive permutation) groups are multiplication groups of loops has been studied but remains largely unanswered. Although we will only need some of the known results below, we include them for the sake of completeness. All groups are assumed to be in their natural permutation representation.

\begin{proposition}\label{Pr:Are} The following groups are multiplication groups of loops:
\begin{enumerate}
\item[(i)] $S_n$ for $n\ge 2$ \cite{DrMS},
\item[(ii)] $A_n$ for $n\ge 6$ \cite{DrKe},
\item[(iii)] $M_{12}$ \cite{Co},
\item[(iv)] $M_{24}$ \cite{NaEUJC}.
\end{enumerate}
If $n\ge 3$ and $q^n>8$ then there is a loop $Q$ with $PSL(n,q)\le\mlt{Q}\le PGL(n,q)$ \cite{NaEUJC}.
\end{proposition}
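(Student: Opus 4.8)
The plan is to obtain such a loop through the loop-folder correspondence of \S\ref{Sc:Folders}. By Lemma~\ref{Lm:SameLoops} it suffices to produce a group $G$ with $PSL(n,q)\le G\le PGL(n,q)$ in its natural action on $\Omega=PG(n-1,q)$ --- say with base point $o$ and point stabilizer $H=G_o$ --- together with a normalized loop transversal $R=\{r_\omega;\;\omega\in\Omega\}\subseteq G$ (so $r_o=1_G$, $or_\omega=\omega$, and $r_\omega r_{\omega'}^{-1}$ is fixed point free whenever $\omega\ne\omega'$) whose associated loop $Q=(\Omega,*)$ satisfies $PSL(n,q)\le\mlt{Q}\le PGL(n,q)$. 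Two observations frame the task: the inclusion $\rmlt{Q}=\langle R\rangle\le G\le PGL(n,q)$ is automatic, so the real work lies in controlling the left translations and in forcing $\mlt{Q}$ to be large; and any such $Q$ is necessarily non-associative, since a group of order $d=(q^n-1)/(q-1)$ has multiplication group of order at most $d^2$, which for $n\ge 3$ is far too small to contain $PSL(n,q)$.

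For the construction I would begin with a Singer cycle $C=\langle\sigma\rangle\le PGL(n,q)$, the image of a cyclic subgroup of $GL(n,q)$ of order $q^n-1$; it has order $d$ and acts regularly on $\Omega$, so $R=C$ merely reproduces the cyclic group $C_d$. I would then perturb $C$ inside $PGL(n,q)$: replace the translations $\sigma^i$, for $i$ ranging over a suitably chosen subset $S\subseteq\{1,\dots,d-1\}$ small enough that the unperturbed elements $\sigma^j$ ($j\notin S$) still generate $C$, by other coset representatives $r_i\in H\sigma^i$, arranged so that (i) all pairwise products $r_ir_j^{-1}$ stay fixed point free and (ii) the transversal stays inside $PGL(n,q)$. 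Because $\sigma$ acts regularly, condition (i) becomes a combinatorial compatibility condition on the ``discrepancy functions'' $\alpha\mapsto j(\alpha)$ determined by $\alpha r_i=\alpha\sigma^{j(\alpha)}$ --- precisely the flavor of the compatibility graph built in Algorithm~\ref{Al:Main}. The hypothesis $q^n>8$ is what should guarantee enough room for a nontrivial such perturbation; the sole excluded configuration, the Fano plane ($n=3$, $q=2$, where the target $\mlt{Q}=PSL(3,2)=PGL(3,2)$ is apparently not realizable), is simply too rigid.

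It then remains to verify the two sides of the sandwich. For $\mlt{Q}\le PGL(n,q)$ one must also place the left translations $L_\omega\colon\alpha\mapsto\omega r_\alpha$ inside $PGL(n,q)$; these are not a priori collineations, so the perturbation above must be designed to respect the projective structure. For $PSL(n,q)\le\mlt{Q}$ the cleanest route is to check that $Q$ is simple, so that $\mlt{Q}$ is primitive --- in fact $2$-transitive --- on $\Omega$ by Proposition~\ref{Pr:Simple}; since the unperturbed $\sigma^j$ still generate $C$, the group $\mlt{Q}$ also contains the Singer cycle $C$ of order $d$, and the classification of $2$-transitive subgroups of $PGL(n,q)$ on $PG(n-1,q)$ then forces $PSL(n,q)\le\mlt{Q}$ (the sporadic small $2$-transitive groups of these degrees, such as $A_7$ at degree $15$, contain no element of order $d$).

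The main obstacle, I expect, is meeting both sides at once: the perturbation must be large enough to pull $\rmlt{Q}$ up to contain $PSL(n,q)$ yet disciplined enough that neither it nor the left translations it does not directly control leave $PGL(n,q)$. A tidier alternative that sidesteps the explicit twist is to build directly from the geometry of $PG(n-1,q)$ a pair of $H$-connected transversals of $PGL(n,q)$ --- for instance from collineations supported on complementary subspaces, whose commutators visibly fix $o$ --- and then to invoke the theory of connected transversals (Niemenmaa, Kepka) to see that the subgroup they generate is the multiplication group of a loop, identifying that subgroup again via its $2$-transitivity. Either way, verifying the connectedness relation together with the generation statement is where the real effort lies, and is presumably what is carried out in \cite{NaEUJC}.
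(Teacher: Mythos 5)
There is a genuine gap, starting with scope: the proposition makes five separate assertions, and your proposal addresses only the last one. Parts (i)--(iv) ($S_n$, $A_n$, $M_{12}$, $M_{24}$) are not touched at all, and each of them requires its own construction. In the paper this proposition is a compendium of results quoted from the literature (\cite{DrMS}, \cite{DrKe}, \cite{Co}, \cite{NaEUJC}) with no proof supplied, so any self-contained proof attempt has to actually deliver the constructions; yours does not.

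For the part you do address, what you have is a strategy, not a proof. Every decisive step is asserted rather than established: that a perturbation of the Singer cycle exists satisfying the fixed-point-free compatibility conditions, that it can be chosen so that the \emph{left} translations $L_\omega$ also land in $PGL(n,q)$ (you correctly flag that this is not automatic, but then leave it as a design requirement), and that the resulting $\rmlt{Q}$ is large enough to contain $PSL(n,q)$. You explicitly defer the real work to ``presumably what is carried out in \cite{NaEUJC}.'' For the record, the construction in that reference goes through a different object entirely: a proper finite semifield $S$ of order $q^n$ with center containing $GF(q)$. Its left and right multiplication maps are $GF(q)$-linear bijections of an $n$-dimensional vector space, so they induce elements of $PGL(n,q)$ on $PG(n-1,q)$ \emph{for free} --- this is what resolves the left-translation problem that your Singer-cycle perturbation must fight against --- and the induced loop on the projective points has $PSL(n,q)\le\mlt{Q}\le PGL(n,q)$. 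The hypothesis $n\ge 3$, $q^n>8$ is then not a heuristic ``enough room'' condition but precisely the Albert--Knuth criterion for the existence of a proper semifield of order $q^n$; your guess that it excludes only the Fano-plane case $q^n=8$ is right, but the reason is the nonexistence of a proper semifield of order $8$, not rigidity of $PG(2,2)$.
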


\begin{proposition}\label{Pr:AreNot} The following groups are not multiplication groups of loops:
\begin{enumerate}
\item[(i)] $PSL(2,q)$ for $q\ge 3$ \cite{VeCA},
\item[(ii)] $M_{11}$, $M_{23}$ \cite{DrDim2},
\item[(iii)] $PSL(2n,q)$ with $q\ge 5$, $PU(n,q^2)$ with $n\ge 6$, $PO(n,q)$ with $n\ge 7$ odd, $PO^\varepsilon(n,q)$ with $n\ge 7-\varepsilon$ even \cite{VeCamb}.
\end{enumerate}
Furthermore, if every $1\ne \alpha\in\mlt{Q}$ fixes at most two points then $Q$ is an abelian group \cite{Dr2Points}.
If $\mlt{Q}\le P\Gamma L(2,q)$ and $q\ge 5$ then $Q$ is an abelian group \cite{DrDim2}.
\end{proposition}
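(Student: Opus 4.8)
Since every clause of Proposition~\ref{Pr:AreNot} is a theorem already available in the literature, the plan is simply to invoke the quoted sources: part (i) from \cite{VeCA}, part (ii) and the last sentence from \cite{DrDim2}, part (iii) from \cite{VeCamb}, and the penultimate sentence from \cite{Dr2Points}. I would not reprove them here, as they only round out the picture begun by Proposition~\ref{Pr:Are} and are not needed for Theorems~\ref{Th:MainC} and~\ref{Th:MainA}.

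It is nevertheless worth describing the mechanism these non-realizability results have in common, since it explains why such rigidity occurs. By the loop-folder correspondence of \S\ref{Sc:Folders} (Lemma~\ref{Lm:SameLoops}), together with the fact that a loop folder realizing $\mlt{Q}$ must take $H=\inn{Q}$ and must contain all left translations of the loop, a transitive group $G$ on $Q$ equals $\mlt{Q}$ for some loop only if $G$ contains a right transversal $R$ to $H=G_1$ with $1_G\in R$ that meets every conjugate $H^g$ in a single point and whose associated left translations, together with $R$, generate $G$; this reformulates as the Niemenmaa--Kepka criterion that $G$ be generated by a pair of $H$-connected transversals with $H$ core-free. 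For the families in (i)--(iii) one then runs through the possible point stabilizers and their conjugates---Borel subgroups in $PSL(n,q)$, parabolics in $PU(n,q^2)$ and in the orthogonal groups, the maximal subgroups of $M_{11}$ and $M_{23}$---and shows that no admissible transversal pair exists, typically by a fixed-point count that forces a forbidden intersection pattern. The two ``furthermore'' statements go in the opposite direction: the stated hypothesis (nontrivial elements fixing at most two points, or $\mlt{Q}\le P\Gamma L(2,q)$ with $q\ge 5$) constrains the action of $\inn{Q}$ on $Q\setminus\{1\}$ so tightly---three-point stabilizers trivial, or embedded in the very restricted action on the projective line---that the connected transversals must commute elementwise, whence $Q$ is an abelian group.

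The genuine difficulty, were one to want self-contained proofs, is that there is no uniform argument: part (iii) rests on the detailed maximal-subgroup theory of several infinite families of classical groups, and hence ultimately on the classification of finite simple groups; the sporadic cases in (ii) are settled by explicit computation inside $M_{11}$ and $M_{23}$; and even part (i) uses the full subgroup structure of $PSL(2,q)$. For the present paper this is immaterial, since only the statements are used.
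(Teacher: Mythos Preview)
Your assessment is correct: Proposition~\ref{Pr:AreNot} carries no proof in the paper---it is a compendium of results quoted from the literature, with each clause tagged by its source, exactly as you describe. Your additional paragraph sketching the common mechanism (connected transversals, Niemenmaa--Kepka criterion, fixed-point obstructions) goes beyond what the paper offers and is accurate background, though not required.
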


\subsection{Primitive groups of affine type}

Recall that the \emph{socle} $\soc{G}$ of a group $G$ is the subgroup (necessarily normal) of $G$ generated by all minimal normal subgroups of $G$. Also recall that a permutation group $G$ on $X$ is \emph{regular} if it is sharply transitive, that is, for every $i$, $j\in X$ there is unique $g\in G$ such that $ig=j$.

Of great importance in the classification of primitive groups is the O'Nan-Scott Theorem (see, for instance, \cite{Ro}). We will only need the part of the O'Nan-Scott Theorem concerned with abelian socle:

\begin{theorem}
Let $G$ be a primitive group of degree $d$, and let $U=\soc{G}$. Then $U$ is abelian if and only if $U$ is regular, elementary abelian $p$-group of order $d=p^n$ and $G$ is isomorphic to a subgroup of the affine linear group $AGL(n,p)$.
\end{theorem}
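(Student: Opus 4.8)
The plan is to prove the nontrivial (``only if'') implication; the converse is immediate, since an elementary abelian group is in particular abelian. So assume $U=\soc{G}$ is abelian. First I would recall that every nontrivial normal subgroup of a primitive group is transitive: if $N$ is a normal subgroup of $G$ with $N\ne 1$, then the $N$-orbits form a $G$-invariant partition of $X$, so by primitivity this partition is trivial, and it cannot consist of singletons because $N$ acts faithfully and nontrivially; hence $N$ is transitive. I would apply this to a minimal normal subgroup $N\le U$, which is abelian because $U$ is. Then I would observe that an abelian transitive group is regular: for $\alpha\in X$ the stabilizer $N_\alpha$ is a normal subgroup of the abelian group $N$, so $N_\beta=N_\alpha^g=N_\alpha$ for every $\beta=\alpha g$, forcing $N_\alpha$ to fix all of $X$, i.e.\ $N_\alpha=1$. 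Thus $N$ is regular and $|N|=|X|=d$.

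Next I would show that $U$ is itself a minimal normal subgroup, namely $U=N$. The key ingredient is that for a regular \emph{abelian} subgroup $N$ of the symmetric group $\mathrm{Sym}(X)$ one has $C_{\mathrm{Sym}(X)}(N)=N$: identifying $X$ with $N$, the centralizer is the ``opposite'' regular copy of $N$, which coincides with $N$ when $N$ is abelian. Now if $M$ were a minimal normal subgroup of $G$ with $M\ne N$, then $M\cap N$ would be a normal subgroup of $G$ properly contained in each of them, so $M\cap N=1$; hence $[M,N]\le M\cap N=1$, giving $M\le C_G(N)\le C_{\mathrm{Sym}(X)}(N)=N$, a contradiction. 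So $N$ is the unique minimal normal subgroup and $U=\soc{G}=N$. Being a minimal normal subgroup of a finite group, $U$ is characteristically simple, hence a direct product of isomorphic simple groups; abelianness forces each factor to be a cyclic group of one fixed prime order $p$, so $U$ is elementary abelian of order $d=|U|=p^n$.

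It then remains to embed $G$ into $AGL(n,p)$. Fix $\alpha\in X$ and identify $X$ with $U$ via $u\mapsto\alpha u$, a bijection because $U$ is regular; then $U$ acts by translations of the $n$-dimensional vector space $U$ over $GF(p)$. The point stabilizer $G_\alpha$ normalizes $U$, hence acts on it by conjugation as automorphisms, yielding a homomorphism $G_\alpha\to\aut{U}=GL(n,p)$ which is injective, since an element of $G_\alpha$ centralizing $U$ fixes every point $\alpha u$; moreover this conjugation action agrees with the permutation action of $G_\alpha$ on $X$ under the identification. Since $U$ is a regular normal subgroup, $G=U G_\alpha$ with $U\cap G_\alpha=1$, and therefore $G$ is isomorphic to a subgroup of the affine group $AGL(n,p)$, the semidirect product of the group of translations of $U$ by $GL(n,p)=\aut{U}$.

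The main obstacle is the centralizer identity $C_{\mathrm{Sym}(X)}(N)=N$ for regular abelian $N$ used in the uniqueness step: it is the one genuinely non-formal point, resting on the fact that the left and right regular representations of an abelian group coincide and that the centralizer of a regular subgroup in the full symmetric group is its opposite regular copy. The only other point needing care is the compatibility check in the last step, that the conjugation action of $G_\alpha$ on $U$ matches its permutation action on $X$ identified with $U$; this is routine but must be made explicit to pin down $G$ as a subgroup of $AGL(n,p)$. Everything else is standard bookkeeping about normal subgroups of primitive groups.
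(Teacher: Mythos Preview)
The paper does not supply a proof of this theorem: it is quoted without argument as the abelian-socle case of the O'Nan--Scott Theorem, with a pointer to the literature (see the sentence preceding the statement and the reference to \cite{Ro}). Your proposal is a correct, self-contained proof of this classical fact, following the standard route: transitivity of nontrivial normal subgroups of a primitive group, regularity of abelian transitive groups, uniqueness of the minimal normal subgroup via the identity $C_{\mathrm{Sym}(X)}(N)=N$ for regular abelian $N$, characteristic simplicity forcing $U$ to be elementary abelian of order $p^n=d$, and the semidirect decomposition $G=U\rtimes G_\alpha$ giving the embedding into $AGL(n,p)$. There is nothing in the paper to compare against, since the result is treated there as background.
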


Primitive groups with abelian socle are therefore called of \emph{affine type}.

Let $Q$ be a simple commutative automorphic loop, and let $R=\{r_i;\;i\in Q\}$ be the right translations of $Q$, with $ir_1=1r_i=i$ for $i\in Q$. Assume that $G=\mlt{Q}$ is a primitive group of affine type of degree $d$ and $H=G_1$. Let also $U=\soc{G}$. Since $U$ is a normal regular subgroup of $G$, we have $|U|=|Q|=d$, and we can write $U=\{u_i;\;i\in Q\}$ with some $u_i$ satisfying $1u_i=i$. Note that $U$ is a right transversal to $H$ in $G$. There are thus uniquely determined $h_i\in H$ such that $r_i = h_iu_i$ for $i\in Q$. We will call this situation the \emph{affine setup}.

Note that in the affine setup we can define an isomorphic copy of the loop $Q$ on $U$ by letting $u_i\bullet u_j = u_i^{h_j}u_j$, since $r_ir_j = h_iu_ih_ju_j = h_ih_ju_i^{h_j}u_j \in Hr_{ij} = Hh_{ij}u_{ij} = Hu_{ij}$.

\begin{lemma}\label{Lm:Generators} In the affine setup, $\langle h_i;\;i\in Q\rangle = H$.
\end{lemma}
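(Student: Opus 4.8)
The plan is to pass to the quotient $G/U$ and exploit that $H$ is a complement to $U$ there. First I would note that since $Q$ is commutative, $L_x = R_x$ for every $x\in Q$, and hence $\mlt{Q} = \rmlt{Q}$ is generated by the right translations alone; in particular $G = \langle r_i;\; i\in Q\rangle$.

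Next, since $U = \soc{G}$ is a regular normal subgroup of $G$ and $H = G_1$, the freeness of the $U$-action gives $H\cap U = U_1 = \{1\}$, and a comparison of orders ($|U| = d = [G:H]$) then yields $G = HU$. Consequently the canonical projection $\pi\colon G\to G/U$ restricts to an isomorphism $\pi|_H\colon H\to G/U$. Since $r_i = h_iu_i$ with $u_i\in U = \ker\pi$, we have $\pi(r_i) = \pi(h_i)$ for every $i\in Q$.

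Applying $\pi$ to the generating set of $G$ now gives $G/U = \pi(G) = \langle\pi(r_i);\; i\in Q\rangle = \langle\pi(h_i);\; i\in Q\rangle$. Setting $K = \langle h_i;\; i\in Q\rangle\le H$, we obtain $\pi(K) = G/U = \pi(H)$, and the injectivity of $\pi|_H$ forces $K = H$, as desired.

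This argument is essentially routine; the only points that require attention are the identity $G = \langle r_i;\; i\in Q\rangle$, which genuinely uses commutativity of $Q$, and the fact that a regular normal socle makes $H$ a complement to $U$, so that $\pi|_H$ is an isomorphism rather than merely a surjection. I do not anticipate any real obstacle.
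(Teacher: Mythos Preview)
Your proof is correct, but it takes a different route from the paper's. The paper proceeds by a direct calculation: using normality of $U$ it shows $r_ir_jr_{ij}^{-1} = h_ih_jh_{ij}^{-1}$, and since $Q$ is commutative one has $H = \inn{Q} = \rinn{Q} = \langle r_ir_jr_{ij}^{-1}\rangle$, whence $H \le \langle h_i\rangle \le H$. Your argument instead exploits the semidirect product structure $G = H\ltimes U$ coming from the affine type and pushes everything through the quotient $\pi\colon G\to G/U$, where $r_i$ and $h_i$ have the same image; injectivity of $\pi|_H$ then finishes it.

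Both arguments ultimately rest on the same observation---that $r_i \equiv h_i \pmod U$---but package it differently. Your approach is cleaner and more conceptual, and would apply verbatim in any situation where $G=\langle r_i\rangle$ and $H$ is a complement to a normal subgroup containing all $u_i$. The paper's version, on the other hand, is more explicit: it exhibits concrete words $h_ih_jh_{ij}^{-1}$ in the $h_i$ that generate $H$, which could be useful if one later needs to manipulate those generators directly. Either proof is perfectly adequate here.
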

\begin{proof}
Note that $r_ir_jr_{ij}^{-1}r_{ij} = r_ir_j = h_ih_ju_i^{h_j}u_j = h_ih_ju_{ij} = h_ih_jh_{ij}^{-1}r_{ij}$, and thus $r_ir_jr_{ij}^{-1}= h_ih_jh_{ij}^{-1}$. Since $Q$ is commutative, we are done by $H=\inn{Q} = \rinn{Q} = \langle r_ir_jr_{ij}^{-1};\;i,j\in Q\rangle = \langle h_ih_jh_{ij}^{-1};\;i,j\in Q\rangle \le \langle h_i;\;i\in Q\rangle\le H$.
\end{proof}

We will need the following result of Niemenmaa and Kepka \cite{NiKe1994}:

\begin{theorem}[Niemenmaa and Kepka]\label{Th:NiemenmaaKepka}
Let $Q$ be a finite loop such that $\inn{Q}$ is abelian. Then $Q$ is nilpotent.
\end{theorem}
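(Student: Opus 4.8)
The plan is to pass from the loop $Q$ to its multiplication group, apply a deep theorem of Niemenmaa and Kepka on connected transversals, and then recover central nilpotency of $Q$ by induction on $|Q|$.

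\emph{Step 1 (from loops to groups).} Set $G=\mlt{Q}$ and $H=\inn{Q}=G_1$. Identify the coset space of $H$ in $G$ with $Q$ via $gH\mapsto 1g$. Then the sets $A=\{L_x;\;x\in Q\}$ and $B=\{R_x;\;x\in Q\}$ are both left transversals of $H$ in $G$ (the coset of $L_x$, resp.\ $R_x$, is pinned down by $1L_x=x$, resp.\ $1R_x=x$), we have $\langle A\cup B\rangle=G$ by the definition of $\mlt{Q}$, and $L_xR_yL_x^{-1}R_y^{-1}\in H$ for all $x,y\in Q$, because $1L_xR_y=xy=1R_yL_x$. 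Thus $A$ and $B$ are \emph{$H$-connected transversals}. Moreover $H$ is core-free in $G$, since $G$ acts faithfully on $Q$. By hypothesis $H$ is abelian.

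\emph{Step 2 (the group-theoretic core).} I would now invoke --- or, for a self-contained treatment, prove --- the main theorem of \cite{NiKe1994}: \emph{if a finite group $G$ has $H$-connected transversals and $H$ is abelian, then $G$ is solvable and $H$ is subnormal in $G$}. This is established by analysing a minimal counterexample $(G,H)$: after factoring out $\mathrm{Core}_G(H)$ --- the connected-transversal condition passes to the relevant quotients, and subnormality lifts back through them --- one reduces to the case where $G$ has a unique minimal normal subgroup $N$; one then handles separately the cases in which $N$ is elementary abelian and in which $N$ is a direct power of a nonabelian simple group, showing in each that connected transversals to an \emph{abelian} subgroup cannot exist, via commutator relations and order estimates.

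\emph{Step 3 (back to the loop).} Applying Step 2 with $G=\mlt{Q}$ and $H=\inn{Q}$ yields that $\inn{Q}$ is subnormal in $\mlt{Q}$. I would then induct on $|Q|$. If $Q$ is trivial there is nothing to prove, so assume $|Q|>1$; then $\inn{Q}=(\mlt{Q})_1$ is a proper subgroup of the finite group $\mlt{Q}$, and a proper subnormal subgroup is always properly contained in its normalizer. By the standard fact that $Z(Q)$ is nontrivial precisely when $\inn{Q}$ is not self-normalizing in $\mlt{Q}$, we get $Z(Q)\neq 1$. The inner mapping group of the central quotient $Q/Z(Q)$ is a homomorphic image of $\inn{Q}$, hence abelian, and $|Q/Z(Q)|<|Q|$; so $Q/Z(Q)$ is nilpotent by the inductive hypothesis, and therefore $Q$ is centrally nilpotent.

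\emph{Main obstacle.} Steps 1 and 3 are routine manipulations with the loop--group dictionary; the real content, and the reason the theorem is deep, is the structural analysis in Step 2 that forces subnormality of an abelian subgroup admitting connected transversals. That is where essentially all of the work lies.
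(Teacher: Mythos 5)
The paper does not actually prove this statement---it is imported verbatim from Niemenmaa--Kepka \cite{NiKe1994}---and your outline is the standard route to it: Steps 1 and 3 are correct (in particular, the ``standard fact'' you invoke is genuine, since $R_a$ normalizes $\inn{Q}$ exactly when $\inn{Q}$ fixes the left inverse of $a$, and the fixed points of $\inn{Q}$ are precisely $Z(Q)$; likewise $\inn{Q/Z(Q)}$ is indeed a quotient of $\inn{Q}$), while Step 2 is exactly the content of the cited theorem and your sketch of it is only a promissory note. So your proposal is essentially the same as the paper's treatment: all of the real work is delegated to \cite{NiKe1994}, and what you add around it is the correct and standard loop--group translation.
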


\begin{remark} Building upon the work of Mazur \cite{Ma}, Niemenmaa obtained a more general result in \cite{Ni}: \emph{Let $Q$ be a finite loop such that $\inn{Q}$ is nilpotent. Then $Q$ is nilpotent.}
\end{remark}

\begin{proposition}\label{Pr:InCenter}
In the affine setup, if $h_i\in Z(H)$ for every $i\in Q$, then $Q$ is a cyclic group of prime order.
\end{proposition}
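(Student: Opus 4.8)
The plan is to show that the hypothesis forces $\inn{Q}$ to be abelian, then invoke the Niemenmaa--Kepka theorem to conclude that $Q$ is nilpotent, and finally use simplicity to collapse $Q$ onto its own center.

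First I would observe that, by Lemma \ref{Lm:Generators}, $H=\langle h_i;\;i\in Q\rangle$. If each $h_i$ lies in $Z(H)$, then $H$ is generated by central elements and is therefore abelian. Since we are in the affine setup with $G=\mlt{Q}$, we have $H=G_1=\inn{Q}$, so $\inn{Q}$ is abelian. (It is worth noting that the commutativity of $Q$ is precisely what makes Lemma \ref{Lm:Generators} available here.)

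Now Theorem \ref{Th:NiemenmaaKepka} applies directly and yields that $Q$ is nilpotent. Recall (see \cite{Br}) that the center $Z(Q)$ of a loop is a normal subloop and that a nilpotent loop of order greater than $1$ has nontrivial center. Since $Q$ is simple, hence nontrivial, we get $Z(Q)\neq\{1\}$, so $Z(Q)=Q$; equivalently, $Q$ is an abelian group. A simple abelian group is cyclic of prime order, which completes the proof.

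There is no genuinely difficult step here: the argument is a short chain of previously recorded results. The only points that need care are the identification $H=\inn{Q}$, which is immediate once $G=\mlt{Q}$, and the two standard loop-theoretic facts used in the last paragraph (normality of the center, and the existence of a nontrivial center in a nontrivial nilpotent loop), for which I would cite \cite{Br}.
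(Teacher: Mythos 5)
Your proof is correct and follows exactly the paper's own argument: Lemma \ref{Lm:Generators} gives $H=Z(H)$ abelian, Theorem \ref{Th:NiemenmaaKepka} gives nilpotence, and simplicity forces $Q=Z(Q)$ to be a simple abelian group, hence cyclic of prime order. The extra detail you supply about the nontrivial center of a nontrivial nilpotent loop is the same reasoning the paper leaves implicit.
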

\begin{proof}
Assume that $h_i\in Z(H)$ for every $i\in Q$. By Lemma \ref{Lm:Generators}, $\inn{Q}=H=Z(H)$ is an abelian group. By Theorem \ref{Th:NiemenmaaKepka}, $Q$ is nilpotent. Since $Q$ is simple, it follows that $Q=Z(Q)$ is a simple abelian group, necessarily a cyclic group of prime order.
\end{proof}

\begin{proposition}\label{Pr:ConjugacyClasses} In the affine setup, let $\gamma$ be the size of a largest orbit of $H$ on $Q$. If $Q$ is not associative, then $H$ contains a conjugacy class $C$ of size $1<|C|\le \gamma$.
\end{proposition}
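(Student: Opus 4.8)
The plan is to compare the conjugation action of $H$ on itself with the action of $H$ on $Q$, transported through the correspondence $i\mapsto h_i$ of the affine setup. First I would record the basic equivariance: for every $i\in Q$ and $h\in H$ one has $h_i^{\,h}=h_{ih}$. To see this, note that $Q$ is automorphic, so $H=\inn{Q}\le\aut{Q}$, and Proposition \ref{Pr:ForASearch}(i) gives $r_i^{\,h}=r_{ih}$. Since $U=\soc{G}$ is normal in $G=\mlt{Q}$, the element $u_i^{\,h}=h^{-1}u_ih$ again lies in $U$; and because $h^{-1}$ fixes $1$ we get $1u_i^{\,h}=1h^{-1}u_ih=(1u_i)h=ih$, so by regularity (sharp transitivity) of $U$ we must have $u_i^{\,h}=u_{ih}$. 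Substituting $r_i=h_iu_i$ into $r_i^{\,h}=r_{ih}$ then yields $h_i^{\,h}u_{ih}=h_{ih}u_{ih}$, and cancellation in $G$ gives $h_i^{\,h}=h_{ih}$.

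Next I would locate an $h_i$ outside the centre of $H$. If every $h_i$ lay in $Z(H)$, then by Proposition \ref{Pr:InCenter} the loop $Q$ would be a cyclic group of prime order, hence associative, contrary to hypothesis. So fix $i\in Q$ with $h_i\notin Z(H)$, and let $C$ be the conjugacy class of $h_i$ in $H$; then $|C|>1$.

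Finally I would bound $|C|$ by $\gamma$. By the equivariance identity, $C=\{h_i^{\,h}:h\in H\}=\{h_{ih}:h\in H\}$ is precisely the image of the $H$-orbit $iH=\{ih:h\in H\}$ under the (well-defined) map $j\mapsto h_j$ from $Q$ to $H$. Consequently $|C|\le|iH|\le\gamma$, which together with $|C|>1$ gives the claim. The only step carrying real content is the equivariance identity $h_i^{\,h}=h_{ih}$, and within it the point needing a little care is the equality $u_i^{\,h}=u_{ih}$, which uses both the normality and the sharp transitivity of the socle $U$; everything after that is immediate.
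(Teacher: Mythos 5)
Your proof is correct and follows essentially the same route as the paper's: both hinge on the equivariance identity $h_i^{\,h}=h_{ih}$ (derived from $r_i^{\,h}=r_{ih}$ together with normality and regularity of the socle $U$), use Proposition \ref{Pr:InCenter} to produce a non-central $h_i$, and then bound $|h_i^H|$ by the size of the orbit $iH$. Your write-up is if anything slightly more explicit about why $u_i^{\,h}=u_{ih}$, but the argument is the same.
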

\begin{proof}
If every conjugacy class $h_i^H$ is trivial then $Q$ is a group by Proposition \ref{Pr:InCenter}. Thus we can assume that there is $C=h_i^H$ such that $|C|>1$.
By Lemma \ref{Lm:ACharacterization}, $h_i^mu_i^m = r_i^m = r_{im} = h_{im}u_{im}$ for every $m\in H$. Since $U$ is normal in $G$, $u_i^m = u_j$ for some $j$, in fact, $j = 1u_j = 1u_i^m = 1m^{-1}u_im = 1u_im = im$. Thus $u_i^m = u_{im}$, and $h_i^m = h_{im}$ follows. Then
$C = h_i^H = \{h_{im};\;m\in H\}$, and thus $|C|$ cannot exceed the size of the $H$-orbit of $i$ on $Q$. In particular, $|C|\le \gamma$.
\end{proof}

\subsection{Simple commutative automorphic loops of orders $32$ and $128$}

To illustrate the theoretical results, we show, without the search of \S \ref{Sc:Algorithm}, that there are no non-associative simple commutative automorphic loops of orders $32$ and $128$.

\textit{Order $32$:} The groups $A_{32}$ and $S_{32}$ are $4$-transitive, and can be eliminated by Lemma \ref{Lm:4Transitive}. The groups $AGL(1,32)$ and $A\Gamma L(1,32)$ are solvable, eliminated by \cite{VeJA}. The groups $PSL(2,31)$ and $PGL(2,31)$ are eliminated by Proposition \ref{Pr:AreNot}. The group $ASL(5,2)$ has no nontrivial conjugacy class of size less than $32$, so it is eliminated by Proposition \ref{Pr:ConjugacyClasses}. There are no other primitive groups of degree $32$.

\textit{Order $128$:} The groups $A_{128}$, $S_{128}$ are $4$-transitive, and the groups $AGL(1,128)$, $A\Gamma L(1,128)$ are solvable. The groups $PSL(2,127)$, $PGL(2,127)$ are eliminated by Proposition \ref{Pr:AreNot}. The group $AGL(2,7)$ has no nontrivial conjugacy class of size less than $128$, so it is eliminated by Proposition \ref{Pr:ConjugacyClasses}. There are no other primitive groups of degree $128$.

\section{Reformulation of the main problem to group theory}\label{Sc:Reformulation}

We conclude this paper by restating Problem \ref{Pr:C} entirely within group theory. We claim that Problem \ref{Pr:C} is equivalent to the following:

\begin{problem}\label{Pr:CG} Is there a set $Q$ containing $1$, a permutation group $G$ on $Q$, and a subset $R\subseteq G$ containing $1_G$ such that:
\begin{enumerate}
\item[(a)] $G$ is primitive on $Q$ and $|Q|=2^n > 2$,
\item[(b)] $R$ is a right transversal to $H=G_1$ in $G$,
\item[(c)] $G=\langle R\rangle$,
\item[(d)] $[R^{-1},R^{-1}]\le H$,
\item[(e)] $R^h = R$ for every $h\in H$?
\end{enumerate}
\end{problem}

Indeed, assume that all conditions of Problem \ref{Pr:CG} are satisfied. By (b), we can assume that $R=\{r_i;\;i\in Q\}$, where $1r_i=ir_1=i$ for every $i\in Q$. To show that the groupoid $(Q,*)$ defined by $i*j=ir_j$ is a loop, we use the following result, which can be deduced from \cite[Lemmas 2.1 and 2.2]{NiKe}:

\begin{lemma}\label{Lm:NiKeDual}
Let $H$ be a subgroup of $G$ and let $A$, $B$ be right transversals to $H$ such that $[A^{-1},B^{-1}]\le H$. Then both $A$ and $B$ are right transversals to every conjugate of $H$ in $G$.
\end{lemma}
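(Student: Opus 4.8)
The statement is symmetric in $A$ and $B$, and the hypothesis $[A^{-1},B^{-1}] \le H$ together with the fact that $A^{-1}$ is itself a left transversal (hence $\langle A^{-1} \rangle$ together with $H$ covers $G$) suggests that the roles of $A$ and $B$ are genuinely interchangeable once we normalize, so it suffices to prove that $A$ is a right transversal to $H^b$ for every $b \in B$; applying the conclusion with $A$ and $B$ swapped then handles $B$ as well. The plan is to fix $b \in B$ and show that for each $g \in G$ there is a unique $a \in A$ with $g \in H^b a$, equivalently with $b g a^{-1} b^{-1} \in H$, equivalently with $bga^{-1} \in Hb$. Since $A$ is a right transversal to $H$ (the ordinary one), given $bg \in G$ there is a unique $a \in A$ with $bg \in Ha$, i.e. $bga^{-1} \in H$. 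So I need to upgrade ``$bga^{-1} \in H$'' to ``$bga^{-1} \in Hb$'', and the commutator hypothesis is exactly what converts one into the other.

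The key computation I would carry out is the following. Write $A = \{a_i\}$, $B = \{b_j\}$, and note first that $1_G \in H \cap A \cap B$ may be assumed after translating (or one argues directly). For $a \in A$, $b \in B$, the commutator $[a^{-1}, b^{-1}] = a b a^{-1} b^{-1} \in H$ gives $a b a^{-1} \in Hb$, hence $b a^{-1} \in a^{-1} H b$. I would use this to show: if $g \in G$ and $a \in A$ is the unique element with $bga^{-1} \in H$, then in fact $bga^{-1} \in Hb$. For the existence half, I would instead start from a candidate: given $g$, there is a unique $a \in A$ with $g \in Ha$ (ordinary transversal to $H$), say $g = ha$ with $h \in H$; I want to adjust to land in $H^b a'$. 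The cleanest route is to count: $|G| = |H| \cdot |A| = |H^b| \cdot |A|$, so once I establish that the $|A|$ cosets $\{H^b a : a \in A\}$ are pairwise distinct, they must partition $G$ and $A$ is automatically a right transversal to $H^b$. Distinctness of $H^b a$ and $H^b a'$ for $a \ne a'$ in $A$ means $a a'^{-1} \notin H^b$, i.e. $b a a'^{-1} b^{-1} \notin H$; I would prove this by writing $b a a'^{-1} b^{-1} = (bab^{-1})(ba'^{-1}b^{-1})$ and using $[a^{-1},b^{-1}],[a'^{-1},b^{-1}] \in H$ to rewrite $bab^{-1} \in Ha$ and $ba'^{-1}b^{-1} \in a'^{-1}H$ (careful with left/right cosets here), so that $baa'^{-1}b^{-1} \in Haa'^{-1}H$; since $A$ is a right transversal to $H$, $a a'^{-1} \notin H$ forces $Haa'^{-1}H \cap H = \emptyset$, giving the claim.

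The main obstacle I anticipate is bookkeeping with the sidedness of cosets: the hypothesis $[A^{-1},B^{-1}] \le H$ naturally produces mixed expressions like $bab^{-1} \in Ha$ and $a^{-1}b^{-1}a \in b^{-1}H$, and one must be disciplined about which side $H$ sits on at each step so that the final ``$\in Haa'^{-1}H$ is disjoint from $H$'' argument actually goes through. A secondary point to check is that the normalization $1_G \in A \cap B \cap H$ (or conjugating so that it holds) does not lose generality for the statement as used; in the application $A = R$ contains $1_G$ by hypothesis (b)/(c), and $B$ will be taken to be $R$ as well, so this is harmless. Once the sidedness is pinned down, the counting argument in the previous paragraph finishes both $A$ and $B$ simultaneously by the $A \leftrightarrow B$ symmetry of the hypothesis.
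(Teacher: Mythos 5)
Your proposal is essentially correct in the finite setting and the commutator bookkeeping does check out, but it takes a somewhat different route from the paper and has two points you must make explicit. First, you only prove that $A$ is a right transversal to $H^b$ for $b\in B$; this does cover all conjugates, but only because every $g\in G$ factors as $g=hb$ with $h\in H$, $b\in B$, so that $H^g=H^{hb}=H^b$ --- state this, since the lemma asserts the conclusion for \emph{every} conjugate $H^g$. (The paper treats an arbitrary $x\in G$ directly, but uses exactly this factorization $x=hb$ inside its computation.) Second, your surjectivity step is a counting argument: the cosets $H^ba$, $a\in A$, are pairwise distinct, there are $|A|=[G:H]=[G:H^b]$ of them, hence they partition $G$. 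This is valid only when $[G:H]$ is finite; the lemma is stated for arbitrary $G$ and $H$, and the paper instead proves coverage directly by writing any $y\in G$ as $y=kab=k[a^{-1},b^{-1}]ba$ with $yb^{-1}=ka$, $k\in H$, $a\in A$, so $y\in Hba$. In the paper's applications everything is finite, so your restriction is harmless there, but it makes your proof strictly less general. Your injectivity computation is sound: from $aba^{-1}b^{-1}\in H$ and $a'ba'^{-1}b^{-1}\in H$ one gets $bab^{-1}\in Ha$ and $ba'^{-1}b^{-1}\in a'^{-1}H$, hence $baa'^{-1}b^{-1}\in Haa'^{-1}H$, and $Haa'^{-1}H\cap H\neq\emptyset$ forces $aa'^{-1}\in H$ (a fact about double cosets that needs nothing beyond $H$ being a subgroup), contradicting $Ha\neq Ha'$. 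The swap handling $B$ is legitimate since the subgroup $[A^{-1},B^{-1}]$ equals $[B^{-1},A^{-1}]$, and your worry about normalizing $1_G\in A\cap B$ is moot: neither the distinctness computation nor the counting uses it.
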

\begin{proof}
Fix $x\in G$. Then $x=hb$ for unique $h\in H$, $b\in B$. Let $y\in G$. Then $yb^{-1} = ka$ for unique $k\in H$, $a\in A$. Then $y=kab = k[a^{-1},b^{-1}]ba = k[a^{-1},b^{-1}]h^{-1}xa\in Hxa$, so $G = \bigcup_{a\in A} Hxa$. Suppose that $Hxa\cap Hxc\ne \emptyset$ for some $a$, $c\in A$. Then $xac^{-1}x^{-1}\in H$, and $ac^{-1} = [a^{-1},b^{-1}]h^{-1}hbac^{-1}b^{-1}h^{-1}h[b^{-1},c^{-1}] = [a^{-1},b^{-1}]h^{-1}(xac^{-1}x^{-1})h[b^{-1},c^{-1}]\in H$, so $a=c$. This means that $xA$ is a right transversal to $H$. Then for a given $g\in G$ there are unique $h\in H$, $a\in A$ such that $xg=hxa$, $g=h^xa$. Hence $A$ is a right transversal to $H^x$. Similarly for $B$.
\end{proof}

By Lemma \ref{Lm:NiKeDual}, (b) and (d), $R$ is a right transversal to every conjugate of $H$ in $G$. By Lemma \ref{Lm:SameLoops}, $(Q,*)$ is a loop. Since $\rmlt{Q}=\langle R\rangle =G$ by (c) and $G$ is primitive by (a), $\mlt{Q}$ is primitive on $Q$ and hence $Q$ is simple by Proposition \ref{Pr:Simple}. By (d), $1[r_i^{-1},r_j^{-1}] = 1r_ir_jr_i^{-1}r_j^{-1}=1$ for every $i$, $j$, so $Q$ is commutative, $\mlt{Q}=\rmlt{Q}=G$, and $\inn{Q}=G_1=H$. By (e), for every $i\in Q$ and every $h\in \inn{Q}=H$ there is $j\in Q$ such that $r_i^h=r_j$. In this situation we have $1r_i^h=1r_j$, $ih = j$, and so $r_i^h = r_{ih}$. By Corollary \ref{Cr:ALoop}, $Q$ is automorphic. Since $|Q|=2^n>2$ by (a), $Q$ is not associative.

Conversely, if $Q$ is a non-associative finite simple commutative automorphic loop, then we can take $G=\mlt{Q}$, $R=\{R_i;\;i\in Q\}$, $H=G_1=\inn{Q}$, and observe (a) by Proposition \ref{Pr:Simple} and \cite[Proposition 6.1 and Theorem 6.2]{JKV1}, (b) by \S \ref{Sc:Folders}, (d) because $Q$ is commutative, (c) since $G = \mlt{Q}=\rmlt{Q}=\langle R\rangle$, and (e) by Lemma \ref{Lm:ACharacterization}.

\medskip

While attempting to answer Problem \ref{Pr:CG}, it will be useful to consider nontrivial consequences of (a)--(e). For instance,
\begin{enumerate}
\item[(f)] $\{r^2;\;r\in R\}\subseteq H$
\end{enumerate}
holds if and only if the loop $Q$ has exponent two, which must be true by \cite{JKV1}.

We are therefore interested in structural descriptions of primitive permutation groups of degree $2^n$. The following result of Guralnick and Saxl is from \cite{GuSa}:

\begin{theorem}[Guralnick and Saxl]\label{Th:GuSa}
Let $G$ be a primitive permutation group of degree $2^n$. Then either $G$ is of affine type, or $G$ has a unique minimal normal subgroup $N=S\times \cdots \times S = S^t$, $t\ge 1$, $S$ is a nonabelian simple group, and one of the following holds:
\begin{enumerate}
\item[(i)] $S=A_m$, $m=2^e\ge 8$, $n=te$, and the point stabilizer in $N$ is $N_1 = A_{m-1}\times \cdots \times A_{m-1}$, or
\item[(ii)] $S=PSL(d,q)$, $2^e=(q^d-1)/(q-1)\ge 8$, $d\ge 2$ is even, $q$ is odd, $m=te$, and the point stabilizer in $N$ is the direct product of maximal parabolic subgroups stabilizing either a $1$-space or hyperplane in each copy.
\end{enumerate}
\end{theorem}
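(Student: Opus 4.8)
The plan is to combine the O'Nan--Scott classification with the arithmetic constraint that the degree is a $2$-power. First I would dispose of the case of abelian socle: by the form of the O'Nan--Scott theorem recalled above, if $\soc{G}$ is abelian then $G$ is of affine type, which is the first alternative. So assume $\soc{G}$ is nonabelian; then the socle is a direct product $N = S_1 \times \cdots \times S_t$ of isomorphic nonabelian simple groups, and one must rule out the diagonal, product, and twisted-wreath O'Nan--Scott types, leaving only the almost simple and product-action-on-a-primitive-component cases in which the analysis reduces to understanding a single primitive almost simple group $S \le G_0 \le \aut{S}$ of degree $2^e$ (with $n = te$), where $G$ acts in product action on $t$ copies. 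The reduction to a single simple group $S$ is the structural heart of the argument; the point stabilizer $N_1$ is then the $t$-fold direct power of the point stabilizer $S_1$ of $S$ acting on $2^e$ points.

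Next I would invoke the classification of finite simple groups, in the form of the known list of primitive (equivalently, maximal-subgroup) actions of an almost simple group on $2^e$ points. The constraint is severe: $|S : S_1| = 2^e$ must be a power of $2$. For $S$ alternating this forces the natural action of $A_m$ on $m = 2^e$ points with $S_1 = A_{m-1}$, giving alternative~(i); here $m = 2^e \ge 8$ since $A_m$ must be nonabelian simple and $2$-transitive of $2$-power degree with $m \ge 5$, and $m$ a power of $2$ rules out $m = 5, 6, 7$. For $S$ of Lie type, one examines each family using the order formulas and Zsigmondy-type prime arguments: a maximal subgroup of $2$-power index is extremely restrictive, and the surviving family is $PSL(d,q)$ acting on the $(q^d-1)/(q-1)$ points of projective space (or its dual), with $S_1$ a maximal parabolic stabilizing a $1$-space or a hyperplane. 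Requiring $(q^d-1)/(q-1) = 2^e$ to be a $2$-power forces $q$ odd (else the index is odd unless $d=1$) and $d$ even (this is the classical fact that $(q^d-1)/(q-1)$ is odd when $d$ is odd and $q$ is odd), yielding alternative~(ii); sporadic groups are eliminated by inspecting their maximal subgroup lists, as none has a subgroup of $2$-power index giving a new example.

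The main obstacle is the Lie-type case analysis: one must go family by family (linear, unitary, symplectic, orthogonal, exceptional) through the maximal subgroups and show that $2$-power index never occurs except for the parabolic in $PSL(d,q)$ with the stated parity conditions. This is exactly where the heavy CFSG-dependent input lies, and it is the technically substantial step; the elementary number theory pinning down $d$ even and $q$ odd from $(q^d-1)/(q-1) = 2^e$ is routine by comparison. I would organize this step by first reducing via Zsigmondy primes (a primitive prime divisor of $q^d - 1$ must divide $|S : S_1|$ unless small exceptions occur), which immediately forces $S_1$ to contain a large torus and hence to be parabolic, and then identifying which parabolic gives $2$-power index.

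\begin{remark}
We do not reproduce the proof here; it is a direct reading of \cite{GuSa}, combined with the O'Nan--Scott theorem and the classification of finite simple groups. We will only use the statement.
\end{remark}
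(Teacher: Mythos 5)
The paper offers no proof of this theorem: it is imported verbatim from Guralnick and Saxl \cite{GuSa}, so your decision to defer to that reference is exactly what the authors do. Your accompanying sketch (O'Nan--Scott to eliminate the non-affine types other than product action on an almost simple component, then CFSG to classify simple groups with a subgroup of $2$-power index, with the parity of $(q^d-1)/(q-1)$ forcing $q$ odd and $d$ even) is a correct outline of the standard argument and raises no concerns.
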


\section*{Acknowledgement}

We thank Robert Guralnick for useful comments and for bringing Theorem \ref{Th:GuSa} to our attention. We thank the anonymous referee who suggested several improvements to the presentation of the paper.


\begin{thebibliography}{99}

\bibitem{Al} A.~A.~Albert, \emph{Quasigroups I}, Trans. Amer. Math. Soc. \textbf{54} (1943), 507--519.

\bibitem{As} M.~Aschbacher, \emph{On Bol loops of exponent 2}, J.~Algebra \textbf{288} (2005), no. \textbf{1}, 99--136.

\bibitem{Ba} R.~Baer, \emph{Nets and groups}, Trans. Amer. Math. Soc. \textbf{47} (1939), 110–-141.

\bibitem{Br} R.~H.~Bruck, A Survey of Binary Systems, third printing, corrected, \emph{Ergebnisse der Mathematik und ihrer Grenzgebiete} \textbf{20}, Springer-Verlag, Berlin 1971.

\bibitem{BP} R.~H.~Bruck and L.~J.~Paige, \emph{Loops whose inner mappings are automorphisms}, Ann. of Math.
(\textbf{2}) \textbf{63} (1956), 308--323.

\bibitem{BS} B.~Baumeister and A.~Stein, \emph{Self-invariant 1-factorizations of complete graphs and finite Bol loops of exponent 2},
Beitr\"{a}ge zur Algebra und Geometrie \textbf{51} (2010), no. \textbf{1}, 117--135.

\bibitem{Ca} P.~J.~Cameron, \emph{Almost all quasigroups have rank 2}, Discrete Mathematics \textbf{106}/\textbf{107} (1992), 111--115.

\bibitem{Co} J.~H.~Conway, \emph{The Golay codes and Mathieu groups}, in Sphere Packings, Lattices and Groups (J.~H.~Conway and N.~J.~A.~Sloane, Eds.), Chap. \textbf{11}, Springer-Verlag, Berlin/New York, 1988.

\bibitem{DrMS} A.~Dr\'apal, \emph{Latin squares and groups}, M.S. thesis, Charles University, Prague, 1979.

\bibitem{DrKe} A.~Dr\'apal and T.~Kepka, \emph{Alternating groups and latin squares}, European J.~Combin.~\textbf{10} (1989), 175--180.

\bibitem{Dr2Points} A.~Dr\'apal, \emph{Multiplication groups of finite loops that fix at most two points}, J.~Algebra \textbf{235} (2001), 154--175.

\bibitem{DrDim2} A.~Dr\'apal, \emph{Multiplication groups of loops and projective semilinear transformations in dimension two}, J.~Algebra \textbf{251} (2002), 256--278.

\bibitem{FuNa}
M.~Funk and P.~T.~Nagy, \emph{On collineation groups generated by Bol reflections},
J. Geom. \textbf{48} (1993), no. \textbf{1}--\textbf{2}, 63--78.

\bibitem{GAP} The GAP Group, GAP -- Groups, Algorithms, and Programming, Version 4.4.12; 2008. (http://www.gap-system.org)

\bibitem{GoRo82}
E.~G.~Goodaire and D.~A.~Robinson, \emph{A class of loops which are isomorphic to all loop isotopes},
Canad. J. Math. \textbf{34} (1982), 662--672.

\bibitem{GoRo}
E.~G.~Goodaire and D.~A.~Robinson, \emph{Semi-direct products and Bol loop}, Demonstratio Math. \textbf{27} (1994), no. \textbf{3}--\textbf{4}, 573--588.

\bibitem{GuSa} R.~M.~Guralnick and J.~Saxl, \emph{Monodromy groups of polynomials}, Groups of Lie type and their geometries (Como, 1993), 125--150, London Math. Soc. Lecture Note Ser. \textbf{207}, Cambridge Univ. Press, Cambridge, 1995.

\bibitem{JKV1} P.~Jedli\v{c}ka, M.~K.~Kinyon and P.~Vojt\v{e}chovsk\'y, \emph{The structure of commutative automorphic loops}, Trans. Amer. Math. Soc. \textbf{363} (2011), 365--384.

\bibitem{JKV2} P.~Jedli\v{c}ka, M.~K.~Kinyon and P.~Vojt\v{e}chovsk\'y, \emph{Constructions of commutative automorphic loops},
    Comm. Algebra \textbf{38} (2010), no. \textbf{9}, 3243--3267.

\bibitem{Ki}
H.~Kiechle, Theory of $K$-loops, \emph{Lecture Notes in Mathematics} \textbf{1778}, Springer-Verlag, Berlin, 2002.

\bibitem{Kr}
A.~Kreuzer, \emph{Inner mappings of Bruck loops}, Math. Proc. Cambridge Philos. Soc. \textbf{123} (1998), no. \textbf{1}, 53--57.

\bibitem{KKPV} M.~K.~Kinyon, K.~Kunen, J.~D.~Phillips and P.~Vojt\v{e}chovsk\'y, \emph{The structure of automorphic loops}, in preparation.

\bibitem{Ma}
M.~Mazur, \emph{Connected transversals to nilpotent groups}, J.~Group Theory \textbf{10} (2007), no. \textbf{2}, 195--203.

\bibitem{Na}
G.~P.~Nagy, \emph{A class of finite simple Bol loops of exponent 2}, Trans. Amer. Math. Soc. \textbf{361} (2009), no. \textbf{10}, 5331--5343.

\bibitem{NaEUJC}
G.~P.~Nagy, \emph{On the multiplication groups of semifields}, European J. Combin. \textbf{31} (2010), no. \textbf{1}, 18--24.

\bibitem{LOOPS}
G.~P.~Nagy and P.~Vojt\v{e}chovsk\'y, \emph{Loops: Computing with quasigroups and loops in GAP}, version 2.1.0, available at \texttt{http://www.math.du.edu/loops}

\bibitem{Ni}
M.~Niemenmaa, \emph{Finite loops with nilpotent inner mapping groups are centrally nilpotent}, Bull. Austral. Math. Soc. \textbf{79} (2009), 109--114.

\bibitem{NiKe}
M.~Niemenmaa and T.~Kepka, \emph{On multiplication groups of loops},
J.~Algebra \textbf{135} (1990), no. \textbf{1}, 112--122.

\bibitem{NiKe1994}
M.~Niemenmaa and T.~Kepka, \emph{On connected transversals to abelian subgroups}, Bull. Austral.
Math. Soc. \textbf{49} (1994), no. \textbf{1}, 121--128.

\bibitem{Ro}
C.~M.~Roney-Dougal, \emph{The primitive permutation groups of degree less than 2500}, J. Algebra \textbf{292} (2005), no. \textbf{1}, 154--183

\bibitem{GRAPE} L.~H.~Soicher, GRAPE, GRaph Algorithms using PErmutation groups, version 4.3, package for GAP, available at
\texttt{http://www.maths.qmul.ac.uk/\~{}leonard/grape/}

\bibitem{VeCA}
A.~Vesanen, \emph{The group $PSL(2,q)$ is not the multiplication group of a loop}, Comm.~Algebra \textbf{22} (1994), 1177--1195.

\bibitem{VeCamb}
A.~Vesanen, \emph{Finite classical groups and multiplication groups of loops}, Math.~Proc.~Cambridge Philos.~Soc. \textbf{117} (1995), 425--429.

\bibitem{VeJA}
A.~Vesanen, \emph{Solvable groups and loops}, J.~Algebra \textbf{180} (1996), issue \textbf{3}, 862--876.

\end{thebibliography}
\end{document}